\newcommand{\nth}[1]{#1^{\mathrm{th}}}
\def\0{\boldsymbol{0}}
\def\a{\boldsymbol{a}}
\def\b{\boldsymbol{b}}
\def\x{\boldsymbol{x}}
\def\y{\boldsymbol{y}}
\def\v{\boldsymbol{v}}
\def\p{\boldsymbol{p}}
\def\q{\boldsymbol{q}}
\def\I{\text{I}}
\def\R{\mathbb{R}}
\def\Ad{\text{Ad}}
\long\def\answer#1{}
\long\def\comment#1{}
\theoremstyle{definition}
\newtheorem{prop}{Proposition}
\newtheorem{assumption}{Assumption}
\newcolumntype{P}[1]{>{\centering\arraybackslash}p{#1}}
\newcolumntype{M}[1]{>{\centering\arraybackslash}m{#1}}
\title{\LARGE \bf
Decentralized and Recursive Identification for Cooperative Manipulation of Unknown Rigid Body with Local Measurements}
\author{Taosha Fan \hspace{3em} Huan Weng \hspace{3em} Todd Murphey% <-this % stops a space
%\thanks{*This work was not supported by any organization}% <-this % stops a space
\thanks{Taosha Fan, Huan Weng and Todd Murphey are with the Department of Mechanical Engineering, Northwestern University,
         Evanston, IL 60201, USA
        {\tt\small \{taosha.fan, huan\_weng\}@u.northwestern.edu, t-murphey@northwestern.edu}.}%
}
\begin{document}

\maketitle
\thispagestyle{empty}
\pagestyle{empty}

%%%%%%%%%%%%%%%%%%%%%%%%%%%%%%%%%%%%%%%%%%%%%%%%%%%%%%%%%%%%%%%%%%%%%%%%%%%%%%%%
\begin{abstract}
This paper proposes a fully decentralized and recursive approach to online identification of unknown kinematic and dynamic parameters for cooperative manipulation of a rigid body based on commonly used local measurements. To the best of our knowledge, this is the first paper addressing the identification problem for 3D rigid body cooperative manipulation, though the approach proposed here applies to the 2D case as well. In this work, we derive truly linear observation models for kinematic and dynamic unknowns whose state-dependent uncertainties can be exactly evaluated. Dynamic consensus in different coordinates and a filter for dual quaternion are developed with which the identification problem can be solved in a distributed way. It can be seen that in our approach all unknowns to be identified are time-invariant constants. Finally, we provide numerical simulation results to illustrate the efficacy of our approach indicating that it can be used for online identification and adaptive control of rigid body cooperative manipulation.\par
\textit{Index-Terms}--- Cooperative manipulation; 3D rigid body; distributed identification; dynamic consensus; dual quaternion.
\end{abstract}

%%%%%%%%%%%%%%%%%%%%%%%%%%%%%%%%%%%%%%%%%%%%%%%%%%%%%%%%%%%%%%%%%%%%%%%%%%%%%%%%
\section{Introduction}
Multi-robot cooperative manipulation has seen great progress in the last several decades. In the 2D planar scenario, multi-robotic system demonstrates its ability to transport times larger and heavier loads under various conditions \cite{pereira2004decentralized,fink2008multi,murphey2008adaptive}. Recently further attention has been paid to the more general 3D cooperative manipulation of a rigid body, such as multi-finger grasping \cite{li1989grasping}, motion planing to transport a large object \cite{yamashita2000motion}, impedance control for cooperating manipulators based on internal force \cite{bonitz1996internal}, multiple quadrotors with a suspended rigid-body load \cite{wu2014geometric} and robust cooperative manipulation without force and torque information \cite{verginis2016robust}.\par 

{In most of these works, even though some assert that they are adaptive and can deal with uncertainties, it is usually assumed that either kinematic parameters (relative position and orientation) or dynamic parameters (mass, mass center, inertia tensor) are at least partially known, or robots implicitly communicate with a central processing unit to help decision making. These assumptions may be problematic in practice --- it is impossible to always have prior knowledge of the unknown load while implicit communication with a central unit is only available under limited circumstances and essentially reduces the overall distributeness of the system.} As a result, a suitable identification approach to estimating unknown kinematic and dynamic parameters is needed, which benefits multi-robot cooperative manipulation.\par

Though decentralized parameter identification for planar cooperative manipulation has been studied in \cite{franchi2015decentralized} using velocities and forces measured in inertia frame, it is difficult to implement these methods for systems involving a rigid body due to the complexity of dynamics and inconvenience of processing inertia frame measurements. Moreover, forces and torques are practically measured or estimated in a local reference frame and thus estimation of the relative orientation and position from the local frame to inertia frame -- which is typically time-varying -- is additionally required to transform local forces and torques to the inertia frame.\par

In this paper, we  propose a fully decentralized and recursive approach to identifying the kinematic and dynamic unknowns for cooperative manipulation of a 3D rigid body. To the best of our knowledge, similar problems have not been addressed before. The approach proposed can be used for planar cooperative manipulation as well. An advantage of our approach is that the identification only relies on local measurements,\footnote{We refer to measurements in a local reference frame as ``\textit{local measurements}'' and those in an inertial frame as ``\textit{global measurements}''.} the benefits of which are three-fold: i) it is consistent with robotic manipulation where control laws and forces are applied in local reference frame; ii) the rigid body dynamics in a local reference frame are more concise and thus the identification is simplified; iii) it can be shown that all the kinematic and dynamic unknowns to be estimated are constant and no estimation on time-varying parameters/states is needed. 
The other contributions of this paper include the derivation of linear observation models with evaluable state-dependent uncertainties, dynamic consensus in different coordinates and appropriate filtering of dual quaternions for our specific problem.\par

The rest of this paper is organized as follows: \cref{section::note} defines the most frequently used notations in the paper. \cref{section::preliminary} briefly reviews quaternions and dual quaternions that are used to develop linear observation model for pose estimation. \cref{section::problem} formulates the identification problem with common assumptions in cooperative manipulation and in \cref{section::measure} linear observation models for kinematic and dynamic unknowns are derived. \cref{section::filter} discusses state-dependent uncertainties evaluation, dynamic consensus in different coordinates and filtering on dual quaternions so that the identification problem can be properly solved in a distributed way. Numerical results are given in \cref{section::num} and conclusions are made in \cref{section::conclusion}.
\section{Nomenclature}\label{section::note}
\vspace{0.5em}
\noindent
\begin{tabular}{M{2.5cm}@{}p{.2cm}@{}p{5.3cm}}
%\hline
$\mathcal{W}$& & Inertia frame. \\
$\mathcal{S}_i$& & Sensor frame associated with robot $i$. \\
\hline
%\end{tabular}
%\begin{tabular}{M{2.5cm}@{}p{.2cm}@{}p{6cm}}
$g_{ji}\in SE(3)$& & Rigid body transformation matrix from $\mathcal{S}_i$ to $\mathcal{S}_j$. \\
$R_{ji}\in SO(3)$& & Rotational part for $g_{ji}$ . \\
$\bm{t}_{ji}\in \R^3$ & &Translational part for $g_{ji}$ . \\
\hline
$\widehat{\bm{x}}_{ji}\in \widehat{Q}$& & Unit dual quaternion for $g_{ji}$. \\
$\widetilde{\bm{q}}^r_{ji}\in Q$& & Rotational part for $\widehat{\bm{x}}_{ji}$. \\
$\widetilde{\bm{q}}^d_{ji}\in Q$& & Translational part for $\widehat{\bm{x}}_{ji}$. \\
\hline
$\bm{f}_i\in\R^3$ & &Force applied by robot $i$ at $\mathcal{S}_i$. \\
$\bm{\tau}_i\in\R^3$& & Torque applied by robot $i$ robot at $\mathcal{S}_i$. \\
$\bm{F}_i\in\R^3$ & & Equivalent total force applied by all robots at $\mathcal{S}_i$. \\
$\bm{T}_i\in\R^3$ & & Equivalent total torque applied by all robots at $\mathcal{S}_i$. \\
\hline
%$\bm{V}_i\in\R^6$ & Body-fixed twist of $\mathcal{S}_i$. \\
$\bm{\omega}_i\in\R^3$& & Body-fixed angular velocity of $\mathcal{S}_i$. \\
$\bm{v}_i\in\R^3$& & Body-fixed linear velocity of $\mathcal{S}_i$. \\
$\bm{\xi}_i\in\R^6$& & $\bm{\xi}_i=\begin{bmatrix}
\bm{\omega}_i^T & \bm{v}_i^T
\end{bmatrix}^T$. \\
\hline
%$\bm{\xi}_i\in\R^6$ & Body-fixed twist acceleration of $\mathcal{S}_i$. \\
$\bm{\alpha}_i\in\R^3$& & Body-fixed angular acceleration of $\mathcal{S}_i$. \\
$\bm{a}_i\in\R^3$ & &Body-fixed linear acceleration of $\mathcal{S}_i$. \\
$\overline{\bm{a}}_i\in\R^3$& & Body-fixed linear proper acceleration of $\mathcal{S}_i$. \\
\hline
$\bm{p}_c^i\in\R^3$& & Mass center w.r.t. $\mathcal{S}_i$. \\
$m\in \R^+$ & &Mass of the rigid body\\
$\mathcal{I}_i\in\R^{3\times 3}$& & Inertia tensor evaluated at $\p_c^i$ w.r.t. $\mathcal{S}_i$\\
%$\mathcal{I}_i\in\R^{3\times 3}$& & Inertia tensor in $\mathcal{S}_i$. \\
%$$
%\begin{aligned}
%&\mathcal{I}_{xx}^i,\hspace{-3em} & \mathcal{I}_{yy}^i,\; & \mathcal{I}_{zz}^i\\
%&\mathcal{I}_{xy}^i, & \mathcal{I}_{xz}^i,\; & \mathcal{I}_{yz}^i
%\end{aligned}
%$$ & &dddd\\
$\mathcal{I}_{xx}^i$, $\mathcal{I}_{yy}^i$, $\mathcal{I}_{zz}^i$, $\mathcal{I}_{xy}^i$, $\mathcal{I}_{xz}^i$, $\mathcal{I}_{yz}^i$ & & Components of $\mathcal{I}_i$\\
\hline
$\bm{g}\in\R^3$ & &Gravity acceleration in $\mathcal{W}$. \\
%\hline
\end{tabular}
\section{Preliminaries}\label{section::preliminary}
In this section, we give a brief review of \textit{quaternions} and \textit{dual quaternions} that are often used to represent $SO(3)$ and $SE(3)$. A more detailed introduction to quaternions and dual quaternions can be found in \cite{daniilidis1996dual,srivatsan2016estimating}. In this paper, $Q$ and $\widehat{Q}$ are respectively used to denote quaternion and dual quaternion. % In \cref{subsection::pose} dual quaternions are used to develop linear observation models for estimation on $SE(3)$. % Here we adopt the same convention that is used in \cite{srivatsan2016estimating,daniilidis1996dual} for the notation of quaternions and dual quaternions.
%A special focus is how to implement dual quaternions to transform twist $\xi\in T_e SE(3)$ and wrench $\tau \in T_e^* SE(3)$ under different coordinates. 
\subsection{Quaternion}
A quaternion $\widetilde{\bm{q}}=(q_0,\,\bm{q})\in Q$ is a 4-tuple where $q_0\in \R$ is the scalar part and $\bm{q}\in \R^3$ the vector part. 
The multiplication $\odot$ of two quaternions $\widetilde{\bm{p}}$ and $\widetilde{\bm{q}}$ is defined as 
\begin{equation}\label{eq::quat_mult}
\begin{aligned}
\widetilde{\bm{p}}\odot \widetilde{\bm{q}} &= (p_0 q_0 - \bm{p} \cdot \bm{q}, p_0 \bm{q} + q_0 \bm{p} + \bm{p} \times \bm{q}).
\end{aligned}
\end{equation}
Furthermore, linear operators $(\cdot)^+:Q\rightarrow \R^{4\times 4}$ and $(\cdot)^-:Q\rightarrow \R^{4\times 4}$ associated with \cref{eq::quat_mult} are defined as
\begin{equation}\label{eq::quatLR}
\widetilde{\bm{q}}^+ =\begin{bmatrix}
q_0 & -\bm{q}^T\\
\bm{q} & {\bm{q}}^\times + q_0 \I
\end{bmatrix}, \,
\widetilde{\bm{q}}^- =\begin{bmatrix}
q_0 & -\bm{q}^T\\
\bm{q} & -{\bm{q}}^\times + q_0 \I
\end{bmatrix}
\end{equation}
where $(\cdot)^\times: \R^3\rightarrow \R^{3\times 3}$ is a linear operator such that $\bm{a}^\times\bm{b}=\bm{a}\times\bm{b} $ and $\I\in\R^{3\times 3}$ is the identity matrix, then 
$$\widetilde{\bm{p}}\odot \widetilde{\bm{q}} = \widetilde{\bm{p}}^+ \cdot \widetilde{\bm{q}} = \widetilde{\bm{q}}^- \cdot \widetilde{\bm{p}}.$$ 
The conjugate $\widetilde{\bm{q}}^*$ of a quaternion $\widetilde{\bm{q}}$ is  
$$\widetilde{\bm{q}}^* =(q_0,\,-\bm{q}) $$ and $\widetilde{\q}\widetilde{\q}^*=\widetilde{\q}^*\widetilde{\q}=(\|\widetilde{\q}\|^2,\,\0)$.\par 

Unit quaternions $\widetilde{\bm{q}}$ are quaternions with $\|\widetilde{\bm{q}}\|=1$ and can be used to represent $SO(3)$ such that
\begin{equation}\label{eq::quaternion}
\nonumber
\widetilde{\bm{q}} = (\cos\dfrac{\theta}{2},\sin\dfrac{\theta}{2}\bm{\omega})
\end{equation}
where $\theta\in [-\pi,\,\pi]$ is the angle and the unit vector $\bm{\omega}\in\R^3$ is the rotational axis. Besides for unit quaternions we have $$\widetilde{\bm{q}}\odot\widetilde{\bm{q}}^* = \widetilde{\bm{q}}^*\odot\widetilde{\bm{q}} =(1,\,\bm{0}).$$ 
Let $\b'\in\R^3$ be obtained by rotating $\b\in\R^3$ with a unit quaternion $\widetilde{\bm{q}}$ and then we have
$$\widetilde{\b}' = \widetilde{\bm{q}}\odot \widetilde{\b}\odot \widetilde{\bm{q}}^*$$
where $\widetilde{\b}=(0,\,\bm{b})$ and by the conjugate property of unit quaternion it is equivalent to
\begin{equation}\label{eq::quat_lin}
\widetilde{\bm{q}}\odot \widetilde{\b} = \widetilde{\b}'\odot  \widetilde{\bm{q}}.
\end{equation}
\subsection{Dual Quaternion}
A dual quaternion $\widehat{\bm{x}}=\widetilde{\bm{p}}+\epsilon\widetilde{\bm{q}}\in\widehat{Q}$ where $\widetilde{\bm{p}}, \widetilde{\bm{q}}\in Q$ are quaternions and $\epsilon$ is defined to be $\epsilon\neq 0$ and $\epsilon^2=0$. The multiplication $\otimes$ of two dual quaternions $\widehat{\bm{x}}_1 = \widetilde{\bm{p}}_1 +\epsilon\widetilde{\bm{q}}_1$ and $\widehat{\bm{x}}_2 = \widetilde{\bm{p}}_2+\epsilon\widetilde{\bm{q}}_2$ is given by
$$\widehat{\bm{x}}_1\otimes \widehat{\bm{x}}_2 = \widetilde{\bm{p}}_1\odot \widetilde{\bm{p}}_2+\epsilon(\widetilde{\bm{p}}_1\odot \widetilde{\bm{q}}_2+\widetilde{\bm{q}}_1\odot \widetilde{\bm{p}}_2).$$ 
Similarly, linear operators $(\cdot)^+$ and $(\cdot)^-:\widehat{Q} \rightarrow \R^{8\times 8}$ for dual quaternions are defined by
\begin{equation}\label{eq::duatLR}
\widehat{\bm{x}}^+=\begin{bmatrix}
\widetilde{\bm{p}}^+ & \mathbf{O}\\
\widetilde{\bm{q}}^+ & \widetilde{\bm{p}}^+
\end{bmatrix}, \hspace{1.5em}
\widehat{\bm{x}}^-=\begin{bmatrix}
\widetilde{\bm{p}}^- & \mathbf{O}\\
\widetilde{\bm{q}}^- & \widetilde{\bm{p}}^-
\end{bmatrix}
\end{equation} 
so that
\begin{equation}\label{eq::duatLR2}
\widehat{\bm{x}}_1\otimes\widehat{\bm{x}}_2 = \widehat{\bm{x}}_1^+\cdot \widehat{\bm{x}}_2 = \widehat{\bm{x}}_2^-\cdot \widehat{\bm{x}}_1.
\end{equation}
 Dual quaternions have three conjugates which are respectively $\widehat{\bm{x}}^{1*}=\widetilde{\bm{p}}-\epsilon\widetilde{\bm{q}}$, $\widehat{\bm{x}}^{2*}=\widetilde{\bm{p}}^*+\epsilon\widetilde{\bm{q}}^*$ and $\widehat{\bm{x}}^{3*}=\widetilde{\bm{p}}^*-\epsilon\widetilde{\bm{q}}^*$.\par
A dual quaternion $\widehat{\bm{x}}$ is unit if $\widehat{\bm{x}}\otimes \widehat{\bm{x}}^{2*}=1$ which can be used to represent $SE(3)$. Given $g=(R,\,\bm{t})\in SE(3)$ where $R\in SO(3)$ is rotation and $\bm{t}\in \R^3$ the translation, then we may use unit dual quaternion $\widehat{\bm{x}}=\widetilde{\q}_r+\epsilon\widetilde{\q}_d$ to represent $g$ where $\widetilde{\bm{q}}_r$ is a unit quaternion corresponding to the rotation $R\in SO(3)$ and $\widetilde{\q}_d$ is a quaternion corresponding to the translation $\bm{t}\in \R^3$ that is given by
$$\widetilde{\q}_d = \dfrac{\widetilde{\bm{t}}\odot\widetilde{\bm{q}}_r}{2}.$$ 
The rigid body transformation of a point $\b\in \R^3$ given by a unit dual quaternion $\widehat{\bm{x}}$ is
\begin{equation}\label{eq::transform}
\widehat{\b}'=\widehat{\bm{x}}\otimes \widehat{\b}\otimes \widehat{\bm{x}}^{3*}
\end{equation}
where $\widehat{\b}=1+\epsilon \widetilde{\b}$ and $\widetilde{\b}=(0,\,\b)$. It is known for unit dual quaternions that $\widehat{\bm{x}}^{1*}\otimes\widehat{\bm{x}}^{3*}=\widehat{\bm{x}}^{3*}\otimes\widehat{\bm{x}}^{1*}=1$ so that \cref{eq::transform} can be rewritten as
\begin{equation}\label{eq::dquat_lin}
\widehat{\bm{x}}\otimes \widehat{\b} = \widehat{\b}'\otimes \widehat{\bm{x}}^{1*}.
\end{equation}
\cref{eq::quat_lin,eq::dquat_lin} are often used to derive linear observation models to estimate orientation and pose \cite{srivatsan2016estimating,choukroun2006novel}.

\begin{figure}
\includegraphics[trim =0mm 0mm 0mm 0mm,width=0.4\textwidth]{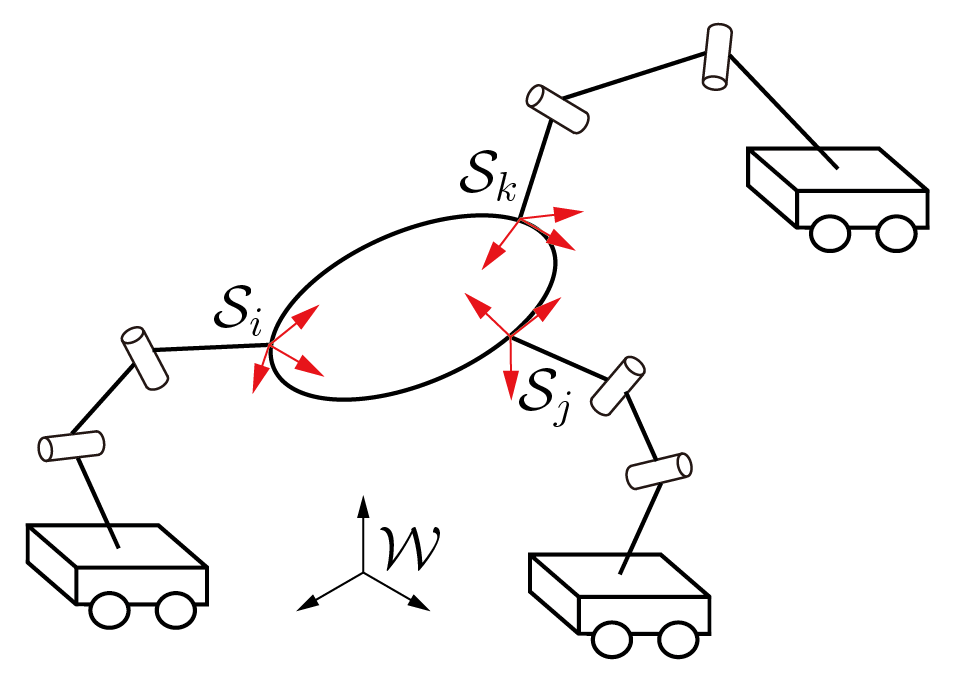}
\caption{Cooperative Manipulation of a 3D rigid body.}\label{fig::demo}
\end{figure}
\section{Problem Formulation}\label{section::problem}
\subsection{Problem Statement}
We consider the problem that a network of $n$ robots manipulate a rigid body load as shown in \cref{fig::demo}. The network is defined as an undirected graph $G=(V,\,E)$ where $V=\{1,\,2,\,3,\,\cdots,\, n\}$ is the node set of robots and $E\subset V\times V$ the edge set of communication links. In this paper we make the following assumptions for our identification problem.
\begin{assumption}\label{assump::1}
The network $G$ is connected and each robot $i$ can only communicate with its one-hop neighbours $N_i=\{j\in V|\,(i,\,j)\in E\}$.
\end{assumption}
\begin{assumption}\label{assump::2}
The end-effector of each robot is fixed with the rigid body.
\end{assumption}
\begin{assumption}\label{assump::3}
Each sensor frame $\mathcal{S}_i$ is fixed with the end-effector of robot $i$ as well as the rigid body whose origin is the contact point of robot $i$ with the rigid body. 
\end{assumption}
\noindent\textit{Remark:} The rigid body transformation $g_{ji}$ between different sensor frame $\mathcal{S}_i$ and $\mathcal{S}_j$ is not priorly known and needs to be identified so that each robot does not know the contact point, position and orientation for the other robots.
\begin{assumption}\label{assump::4}
The body-fixed linear velocity $\v_i$, body-fixed angular velocity $\bm{\omega}_i$, the body-fixed proper acceleration $\overline{\a}_i$ at $\nth{i}$ contact point and force $\bm{f_i}$ and torque $\bm{\tau}_i$ applied by robot $i$ to the rigid body are measurable. All of $\v_i$, $\bm{\omega}_i$, $\overline{\a}_i$, $\bm{f_i}$ and $\bm{\tau}_i$ are measured in sensor frame $\mathcal{S}_i$.
\end{assumption}
\noindent \textit{Remark:} The \textit{proper acceleration} $\overline{\a}_i$ is the acceleration relative to a free-fall which is measurable by a accelerometer. The relationship between proper acceleration $\overline{\a}_i$ and the usual \textit{coordinate acceleration} $\a_i$ is 
\begin{equation}\label{eq::prop_acc}
\overline{\a}_i = \a_i - R_i^T\bm{g}
\end{equation}
where $R_i\in SO(3)$ is the rotation matrix from $\mathcal{S}_i$ to $\mathcal{W}$.\par
\vspace{0.5em}
The overall identification problem addressed in this paper is formulated as follows.\\
\vspace{-0.5em}
\newline
\noindent\textbf{Problem}\hspace{1em} Suppose a group of robots manipulate a rigid body load with \cref{assump::1}-4 hold, we define a distributed and recursive algorithm so that each robot $i$ can identify 
\begin{enumerate}
\item The rigid body transformation $g_{ji}$ where $j\in N_i$
\item The mass $m$, mass center $\p_c^i$ and inertia tensor $\mathcal{I}_i$. 
\end{enumerate} 
\par
\vspace{0.5em}
\noindent\textit{Remark:} The rigid body transformation $g_{ji}$, mass $m$, mass center $\p_c^i$ and inertia tensor $\mathcal{I}_i$ are all \textit{time-invariant constants} in the identification problem.\par
\vspace{0.5em}
One of the things that we are particularly interested in is adaptively estimating time-varying mass $m$, mass center $\p_c^i$ and inertia tensor $\mathcal{I}_i$ with $g_{ji}$ given since these parameters may vary in cooperative manipulation and transportation, e.g., by removing and adding some loads, and play a significant role for the control law design and system stability analysis. \par

Generally speaking, \cref{assump::1}-4 are common and reasonable for multi-robot coordination and robotic manipulation and some of them may be even further relaxed. The feasibility of measurements in \cref{assump::4} is analyzed in \cref{subsection::sensor}.
\subsection{Feasibility of Sensor Measurements}\label{subsection::sensor}
Body-fixed angular velocity $\bm{\omega}_i$ and proper acceleration $\overline{\a}_i$ are respectively measurable with gyroscope and accelerometer. The major concern is the body-fixed linear velocity $\v_i$, force $\bm{f}_i$ and torque $\bm{\tau}_i$ which may need a case by case discussion.\par
\subsubsection{body-fixed linear velocity $\v_i$} 
The body-fixed linear velocity can be calculated by the angular velocity of each joint and the twist of mobile base provided the manipulator Jacobian is given, all of which are either measurable with common sensors or explicitly computable. If the rigid body's orientation and spatial linear velocity is known, body-fixed linear velocity may be determined as well. As for planar cooperative manipulation, body-fixed linear velocity can be measured directly by a optical laser sensor \cite{wang2016kinematic}. 
\subsubsection{force $\bm{f}_i$ and torque $\bm{\tau}_i$}
If the Jacobian matrix is full row rank and the torques at each joint are known, then $\bm{f}_i$ and $\bm{\tau}_i$ are determined. Sometimes we may need some further assumptions, e.g., if all contacts are point contacts so that robots can only apply forces, then only force sensors should be enough.

\section{Observation Modelling}\label{section::measure}
In this section, we derive truly linear observation models for the identification problem formulated in \cref{section::problem}. It can be further shown that the state-dependent uncertainties of these observations can be exactly evaluated.
\subsection{Observation Model for $g_{ji}$}\label{subsection::pose}
It is known that sensor frames are fixed w.r.t. each other, then body-fixed angular and linear velocities $\bm{\omega}_i$, $\bm{v}_i$ and $\bm{\omega}_j$, $\bm{v}_j$ where $(i,\,j)\in E$ are related as
\begin{equation}\label{eq::Ad}
\bm{\xi}_j = \mathrm{Ad}_{g_{ji}}\bm{\xi}_i
\end{equation}
where $\bm{\xi}_i=\begin{bmatrix}
\bm{\omega}_i^T & \bm{v}_i^T
\end{bmatrix}^T$, $\bm{\xi}_j=\begin{bmatrix}
\bm{\omega}_j^T & \bm{v}_j^T
\end{bmatrix}^T$ and $\mathrm{Ad}_{g_{ji}}$ is the adjoint matrix defined by %\cite{murray1994mathematical}
\begin{equation}
\nonumber
\mathrm{Ad}_{g_{ji}}=\begin{bmatrix}
R_{ji} & \bm{O}\\
\bm{t}_{ji}^\times R_{ji} & R_{ji}
\end{bmatrix}.
\end{equation}
\cref{{eq::Ad}} gives an observation with $\bm{\omega}_i$, $\bm{v}_i$, $\bm{\omega}_j$, $\bm{v}_j$ for the relative pose $g_{ji}$. It is difficult and even intractable to estimate $R_{ji}$ and $\p_{ji}$ directly on $SE(3)$ with \cref{eq::Ad} which is nonlinear and complicated.\par
Even though numbers of papers \cite{daniilidis1996dual,srivatsan2016estimating} have used dual quaternions to estimate elements of $SE(3)$, all of them rely on position measurements in $\R^3$ while for our problem only twist measurements in $\R^6$ are provided. The following proposition demonstrates how to do adjoint transformation with dual quaternions so that a linear observation model is developed to estimate $g_{ji}$.  
\begin{prop}\label{prop::1}
Given twist $\bm{\xi}=\begin{bmatrix}
\bm{\omega}\\
\v
\end{bmatrix}\in\R^6$ and unit dual quaternion $\widehat{\bm{x}}$ for rigid body transformation $g\in SE(3)$, then the resulting twist $\bm{\xi}'=\begin{bmatrix}
\bm{\omega}'\\
\v'
\end{bmatrix}\in\R^6$ by adjoint transformation of $g$ can be calculated by $\widehat{\bm{x}}$ as
\begin{equation}\label{eq::dquat_Ad}
\widehat{\bm{\xi}}' = \widehat{\bm{x}}\otimes\widehat{\bm{\xi}}\otimes\widehat{\bm{x}}^{2*}.
\end{equation}
where $\widehat{\bm{\xi}}=\widetilde{\bm{\omega}}+\epsilon\widetilde{\bm{v}}$.
\end{prop}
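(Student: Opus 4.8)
The plan is to expand the right-hand side of \cref{eq::dquat_Ad} directly with the dual-quaternion product and then match its real and dual parts against the explicit adjoint action $\bm{\omega}'=R\bm{\omega}$, $\bm{v}'=\bm{t}^\times R\bm{\omega}+R\bm{v}$ that can be read off from $\mathrm{Ad}_{g}$ in \cref{eq::Ad}. Writing the unit dual quaternion as $\widehat{\bm{x}}=\widetilde{\q}_r+\epsilon\widetilde{\q}_d$ with $\widehat{\bm{x}}^{2*}=\widetilde{\q}_r^*+\epsilon\widetilde{\q}_d^*$, I first form $\widehat{\bm{x}}\otimes\widehat{\bm{\xi}}=\widetilde{\q}_r\odot\widetilde{\bm{\omega}}+\epsilon\big(\widetilde{\q}_r\odot\widetilde{\bm{v}}+\widetilde{\q}_d\odot\widetilde{\bm{\omega}}\big)$, then right-multiply by $\widehat{\bm{x}}^{2*}$ and collect the coefficients of $1$ and of $\epsilon$.

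The real part is $\widetilde{\q}_r\odot\widetilde{\bm{\omega}}\odot\widetilde{\q}_r^*$, which by the unit-quaternion rotation formula equals $(0,\,R\bm{\omega})=\widetilde{\bm{\omega}}'$, so the rotational block of $\mathrm{Ad}_g$ is immediate. The dual part collects three terms, $\widetilde{\q}_r\odot\widetilde{\bm{\omega}}\odot\widetilde{\q}_d^*+\widetilde{\q}_r\odot\widetilde{\bm{v}}\odot\widetilde{\q}_r^*+\widetilde{\q}_d\odot\widetilde{\bm{\omega}}\odot\widetilde{\q}_r^*$; the middle one is again a pure rotation giving $(0,\,R\bm{v})$, while the two outer terms must reproduce the translational coupling $\bm{t}^\times R\bm{\omega}$.

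The key step is to eliminate $\widetilde{\q}_d$ using $\widetilde{\q}_d=\tfrac{1}{2}\widetilde{\bm{t}}\odot\widetilde{\q}_r$, hence $\widetilde{\q}_d^*=-\tfrac{1}{2}\widetilde{\q}_r^*\odot\widetilde{\bm{t}}$ (with $\widetilde{\bm{t}}=(0,\,\bm{t})$ so $\widetilde{\bm{t}}^*=-\widetilde{\bm{t}}$). After substitution both outer terms expose the rotated factor $\widetilde{\q}_r\odot\widetilde{\bm{\omega}}\odot\widetilde{\q}_r^*=(0,\,R\bm{\omega})$ and collapse to $\tfrac{1}{2}\big(\widetilde{\bm{t}}\odot(0,R\bm{\omega})-(0,R\bm{\omega})\odot\widetilde{\bm{t}}\big)$. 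Applying the pure-quaternion commutator identity $\widetilde{\a}\odot\widetilde{\b}-\widetilde{\b}\odot\widetilde{\a}=(0,\,2\,\bm{a}\times\bm{b})$, which follows directly from \cref{eq::quat_mult}, this equals $(0,\,\bm{t}\times R\bm{\omega})$, the factor $2$ cancelling the $\tfrac{1}{2}$ from the translation encoding. Adding the middle term yields the dual part $(0,\,\bm{t}^\times R\bm{\omega}+R\bm{v})=\widetilde{\bm{v}}'$, completing the match with $\mathrm{Ad}_{g}$.

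I expect the only real obstacle to be bookkeeping in the dual part: one must carry the conjugate $\widetilde{\q}_d^*$ and the sign of $\widetilde{\bm{t}}^*$ correctly, and recognize that it is precisely the \emph{second} conjugate $\widehat{\bm{x}}^{2*}$ — rather than the \emph{third} conjugate $\widehat{\bm{x}}^{3*}$ used for point transformation in \cref{eq::transform} — that turns the two outer terms into a commutator. This is the structural reason the moment term $\bm{t}\times R\bm{\omega}$ of the adjoint appears here in place of the additive translation produced by a point transform. Everything else reduces to routine application of the quaternion and dual-quaternion product rules.
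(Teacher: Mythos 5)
Your proposal is correct and takes essentially the same route as the paper's own proof: both expand $\widehat{\bm{x}}\otimes\widehat{\bm{\xi}}\otimes\widehat{\bm{x}}^{2*}$, recognize $\widetilde{\q}_r\odot\widetilde{\bm{\omega}}\odot \widetilde{\q}_r^*$ and $\widetilde{\q}_r\odot\widetilde{\bm{v}}\odot \widetilde{\q}_r^*$ as the rotations $R\bm{\omega}$ and $R\bm{v}$, and substitute $\widetilde{\q}_d=\tfrac{1}{2}\widetilde{\bm{t}}\odot\widetilde{\q}_r$ so that the two remaining cross terms collapse to $(0,\,\bm{t}\times R\bm{\omega})$. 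Your explicit pure-quaternion commutator identity is just a cleaner packaging of the paper's step, where the scalar parts cancel between $\tfrac{1}{2}\,\widetilde{\bm{t}}\odot\widetilde{R\bm{\omega}}$ and $\tfrac{1}{2}\,\widetilde{R\bm{\omega}}\odot\widetilde{\bm{t}}^{\,*}$.
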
 
\begin{proof}
It can be shown merely by calculation that
\begin{multline}\label{eq::qr_all}
\widehat{\bm{x}}\otimes\widehat{\bm{\xi}}\otimes\widehat{\bm{x}}^{2*}
=\widetilde{\q}_r\odot\widetilde{\bm{\omega}}\odot \widetilde{\q}_r^*+\epsilon\big(\widetilde{\q}_r\odot\widetilde{\bm{v}}\odot \widetilde{\q}_r^*+\\ \widetilde{\q}_d\odot\widetilde{\bm{\omega}}\odot \widetilde{\q}_r^*+\widetilde{\q}_r\odot\widetilde{\bm{\omega}}\odot \widetilde{\q}_d^*\big).
\end{multline}
Note that $\widetilde{R\bm{\omega}}=\widetilde{\q}_r\odot\widetilde{\bm{\omega}}\odot \widetilde{\q}_r^*$ and $\widetilde{R\bm{v}}=\widetilde{\q}_r\odot\widetilde{\bm{v}}\odot \widetilde{\q}_r^*$, then for $\widetilde{\q}_d\odot\widetilde{\bm{v}}\odot \widetilde{\q}_r^*+\widetilde{\q}_r\odot\widetilde{\bm{v}}\odot \widetilde{\q}_d^*$, a pure algebraic manipulation indicates
\begin{equation}\label{eq::qr_lin}
\begin{aligned}
&\widetilde{\q}_d\odot\widetilde{\bm{\omega}}\odot \widetilde{\q}_r^*+\widetilde{\q}_r\odot\widetilde{\bm{\omega}}\odot \widetilde{\q}_d^*\\
=&\dfrac{1}{2}\,\widetilde{\bm{t}}\odot\widetilde{\q}_r\odot\widetilde{\bm{\omega}}\odot \widetilde{\q}_r^* + \dfrac{1}{2}\,\widetilde{\q}_r\odot\widetilde{\bm{\omega}}\odot \widetilde{\q}_r^*\odot \widetilde{\bm{t}}^*\\
= & \dfrac{1}{2}\widetilde{\bm{t}\times R\bm{\omega}} - \dfrac{1}{2}\widetilde{ R\bm{\omega}\times\bm{t}} \\
= & \widetilde{\bm{t}\times R\bm{\omega}}.
\end{aligned}
\end{equation}
Substitute \cref{eq::qr_lin} back to \cref{eq::qr_all}, it can be shown that \cref{eq::dquat_Ad} holds, which completes the proof.
\end{proof}
As a result of \cref{prop::1}, \cref{eq::Ad} is equivalent to 
\begin{equation}\label{eq::mea1}
\widehat{\bm{\xi}}_j =\widehat{\bm{x}}_{ji}\otimes \widehat{\bm{\xi}}_i\otimes\widehat{\bm{x}}_{ji}^{2*}.
\end{equation}
Since $\widehat{\bm{x}}_{ji}^{2*}\otimes\widehat{\bm{x}}_{ji}=1+\epsilon\widetilde{\0}$, \cref{eq::mea1} can be written as $\widehat{\bm{\xi}}_j\otimes\widehat{\bm{x}}_{ji} =\widehat{\bm{x}}_{ji}\otimes \widehat{\bm{\xi}}_i$, and according to \cref{eq::duatLR2}, we further have $$\widehat{\bm{\xi}}_{j}^+\cdot\widehat{\bm{x}}_{ji}= \widehat{\bm{\xi}}_{i}^-\cdot\widehat{\bm{x}}_{ji}.$$ Next, simplify the equation above with \cref{eq::quatLR,eq::duatLR}, the final result is
\begin{equation}\label{eq::qr_ob}
\begin{bmatrix}
H_q(\bm{\omega}_i,\,\bm{\omega}_j) & \bm{O}\\
H_q(\bm{v}_i,\,\bm{v}_j) & H_q(\bm{\omega}_i,\,\bm{\omega}_j)
\end{bmatrix}\cdot
\begin{bmatrix}
\widetilde{\q}^r_{ji}\\
\widetilde{\q}^d_{ji}
\end{bmatrix}=\0
\end{equation}
where $H_q:\R^3\times \R^3 \longrightarrow \R^{4\times 4}$ is defined as
\begin{equation}
\nonumber
H_q(\a,\,\b) =\begin{bmatrix}
0 &(\a-\b)^T\\
\b-\a & (\a+\b)^\times
\end{bmatrix}.
\end{equation}\par
In practice, $\bm{\omega}_i$, $\bm{v}_i$, $\bm{\omega}_j$, $\bm{v}_j$ are noisy measurements and we can construct a pseudo-observation model from \cref{eq::qr_ob} 
\begin{equation}
\bm{y}=\begin{bmatrix}
H_q(\bm{\omega}_i,\,\bm{\omega}_j) & \bm{O}\\
H_q(\bm{v}_i,\,\bm{v}_j) & H_q(\bm{\omega}_i,\,\bm{\omega}_j)
\end{bmatrix}\cdot
\begin{bmatrix}
\widetilde{\q}^r_{ji}\\
\widetilde{\q}^d_{ji}
\end{bmatrix}
\end{equation}
and enforce the pseudo-observation $\bm{y}=\0$ just like \cite{srivatsan2016estimating,choukroun2006novel} with position measurements. Thus a linear observation model $\y = H\cdot \widehat{\bm{x}}$ is developed to estimate relative rigid body transformation matrix $g_{ji}$ with dual quaternions. A specific filter is developed to solve the dual-quaternion-based pose estimation problem in \cref{subsection::pose_est}.
\subsection{Observation Model for $\bm{\alpha}_i$}
The body-fixed angular acceleration $\bm{\alpha}(t)$ can be got by differentiating $\bm{\omega}(t)$ as
\begin{equation}\label{eq::diff_w}
\bm{\alpha}(t)=\frac{\bm{\omega}(t+\Delta t)-\bm{\omega}(t-\Delta t)}{2\Delta t}.
\end{equation} 
However, the angular acceleration $\bm{\alpha}(t)$ evaluated by \cref{eq::diff_w} is noisy and an observation model may be needed. Note that the relative pose $g_{ji}$ is time-invariant, then for angular and proper linear accelerations $\bm{\alpha}_i$, $\overline{\a}_i$ and $\bm{\alpha}_j$, $\overline{\a}_j$ we have
$$\begin{bmatrix}
\bm{\alpha}_j\\
\overline{\a}_j
\end{bmatrix} = \Ad_{g_{ji}} \begin{bmatrix}
\bm{\alpha}_i\\
\overline{\a}_i
\end{bmatrix}.$$ 
Provided $g_{ji}=(R_{ji},\,\bm{t}_{ji})\in SE(3)$, $\overline{\a}_i$ and $\overline{\a}_j$ are given, a pseudo-observation model for $\bm{\alpha}_i$ can be derived
\begin{equation}
\y = H_{\bm{\alpha}}\bm{\alpha}_i
\end{equation}
where $H_{\bm{\alpha}}=\bm{t}_{ji}^\times R_{ji}$ and the enforced pseudo-observation is $\y=\overline{\a}_j-R_{ji}\overline{\a}_i$. In addition, from
\begin{equation}\label{eq::con_alpha}
\bm{\alpha}_j = R_{ji}\bm{\alpha}_i,
\end{equation} 
the estimation can be further improved through making consensus over $\bm{\alpha}_i$ for all $i\in V$. 
\subsection{Observation Model for $\p_c^i$}\label{subsection::pc}
It is known that the dynamics on $SE(3)$ is
\begin{subequations}\label{eq::se3_dyn}
\begin{equation}\label{eq::se3_dyn_1}
\mathcal{I}\dot{\bm{\omega}} = \bm{T}- \bm{\omega}\times \mathcal{I}\bm{\omega},
\end{equation}
\begin{equation}\label{eq::se3_dyn_2}
\dot{\v}= \dfrac{\bm{F}}{m}+ R^T\bm{g}-\bm{\omega}\times\v
\end{equation}
\end{subequations}
for which the body-fixed frame origin is at the mass center of the rigid body. By \cref{eq::prop_acc} we may rewrite \cref{eq::se3_dyn} as 
\begin{subequations}\label{eq::dyn}
\begin{equation}\label{eq::dyn_1}
\mathcal{I}\dot{\bm{\omega}} = \bm{T}- \bm{\omega}\times \mathcal{I}\bm{\omega},
\end{equation}
\begin{equation}\label{eq::se3_dyn_2}
\overline{\a}= \dfrac{\bm{F}}{m}-\bm{\omega}\times\v
\end{equation}
\end{subequations}
where $\overline{\a}=\dot{\v}-R^T\bm{g}$ is the proper acceleration of the mass center.
\par
Let $\p_c^i$ be the observation of the mass center in $\mathcal{S}_i$. Then for each robot $i$ a local frame $\mathcal{O}_i$ is assigned at the mass center so that the rigid body transformation matrix $g_i$ from $\mathcal{S}_i$ to $\mathcal{O}_i$ is
$$g_i=\begin{bmatrix}
\mathrm{I} & -\p_c^i\\
\0 & 1
\end{bmatrix}. $$
Given body-fixed angular and linear velocities $\bm{\alpha}_i$ and $\v_i$, angular and linear proper accelerations $\bm{\alpha}_i$ and $\overline{\a}_i$, \cref{eq::dyn} indicates that
\begin{subequations}\label{eq::dyn_ob}
\begin{equation}\label{eq::dyn_ob1}
\mathcal{I}_i\bm{\alpha}_i = \bm{F}_i\times\p_c^i  +\bm{T}_i- \bm{\omega}_i\times \mathcal{I}_i\bm{\omega}_i,
\end{equation}
\begin{equation}\label{eq::dyn_ob2}
\overline{\a}_i+\bm{\alpha}_i\times\p_{c}^i= \dfrac{\bm{F}_i}{m}-\bm{\omega}_i\times(\bm{\omega}_i\times \p_c^i+\v_i)
\end{equation}
\end{subequations}
where $\overline{\a}_i$ is measured by a accelerometer and the total force $\bm{F}_i$ and torque $\bm{T}_i$ applied by all robots are calculated by
\begin{equation}\label{eq::wrench}
\begin{bmatrix}
\bm{T}_i\\
\bm{F}_i
\end{bmatrix}=\sum\limits_{j\in V} \Ad_{g_{ji}}^T\begin{bmatrix}
\bm{\tau}_j\\
\bm{f}_j
\end{bmatrix}.
\end{equation}
which is shown to be computable by making consensus in different coordinates in \cref{subsection::consensus}.\par %Now we will show how to derive an observation model for $\p_c^{i}$ by \cref{eq::dyn_ob2}.
\cref{eq::dyn_ob2} is equivalent to  
\begin{equation}\label{eq::ob_m}
(\bm{\alpha}_i^\times+{\bm{\omega}_i^\times}^2)\cdot\p_c^i+\bm{\omega}_i\times\v_i+\overline{\a}_i = \frac{\bm{F}_i}{m}.
\end{equation}
Note that the mass $m$ remains unknown, however, if $\bm{F}_i\neq \0$, we may let $\bm{F}_i^\perp\in \R^{2\times 3}$ be the row-orthogonal matrix such that 
$$\bm{F}_i^\perp\cdot \bm{F}_i =\0, $$
i.e., columns of ${\bm{F}_i^\perp}^T$ spans the null space of $\bm{F}_i$. Multiply $\bm{F}_i^\perp$ on both sides of \cref{eq::ob_m}, the resulting equation is
\begin{equation}\label{eq::ob_mi2}
\bm{F}_i^\perp(\bm{\alpha}_i^\times+{\bm{\omega}_i^\times}^2)\cdot\p_c^i+\bm{F}_i^\perp\cdot(\bm{\omega}_i\times\v_i+\overline{\a}_i)=\0.
\end{equation}
Thus an observation model for $\p_c^i$ is obtained by \cref{{eq::ob_mi2}}
\begin{equation}\label{eq::ob_pci}
\y_{\p_c}=H_{\p_c}\cdot\p_c^i
\end{equation}
where $H_{\p_c}=\bm{F}_i^\perp(\bm{\alpha}_i^\times+{\bm{\omega}_i^\times}^2)$ and $\y_{p_c}=-\bm{F}_i^\perp\cdot(\bm{\omega}_i\times\v_i+\overline{\a}_i)$.
\subsection{Observation Model for $\mathcal{I}_i$}\label{subsection::I}
Suppose the mass center $\p_c^i$ is known, then an observation model for $\mathcal{I}_i$ can be derived by \cref{eq::dyn_ob1} so that
\begin{equation}\label{eq::ob_mi1}
\y_{\mathcal{I}}=H_{\mathcal{I}}\cdot\mathcal{I}_i^S
\end{equation} 
where $\mathcal{I}_i^S=\begin{bmatrix}
\mathcal{I}_{xx}^i & \mathcal{I}_{yy}^i & \mathcal{I}_{zz}^i & \mathcal{I}_{xy}^i & \mathcal{I}_{xz}^i & \mathcal{I}_{yz}^i
\end{bmatrix}^T$,
$$
\begin{aligned}
\!\!&H_{\mathcal{I}}=\\
&\!\!\begin{bmatrix}
\!\!\alpha_1 \!\!\!& \!\!\! -\omega_2 \omega_3 \!\!\!& \!\!\! \omega_2 \omega_3 \!\!& \!\! \alpha_2-\omega_1 \omega_3 \!\!& \!\! \alpha_3+\omega_1 \omega_2 \!\!& \!\! \omega_2^2-\omega_3^2\\
\!\!\omega_1 \omega_3 \!\!\!& \!\!\! \alpha_2 \!\!\!& \!\!\! -\omega_1 \omega_3 \!\!& \!\! \alpha_1+\omega_2 \omega_3 \!\!& \!\! \omega_3^2-\omega_1^2 \!\!& \!\! \alpha_3-\omega_1 \omega_2\\
\!\!-\omega_1 \omega_2 \!\!\!& \!\!\! \omega_1 \omega_2 \!\!& \!\! \alpha_3 \!\!& \!\!
\omega_1^2-\omega_2^2 \!\!& \!\! \alpha_1-\omega_2\omega_3 \!\!& \!\! \alpha_2+\omega_1\omega_3\!\!
\end{bmatrix}\!\!.
\end{aligned}
$$
and $\y_{\mathcal{I}}=\bm{F}_i\times\p_c^i  +\bm{T}_i$.
\subsection{Observation Model for $m$}\label{subsection::m}
Given $\p_c^i$ and these measurements in \cref{assump::4}, the pseudo-observation model for mass $m$ is trivial from \cref{eq::dyn_ob2}
\begin{equation}\label{eq::ob_mm}
\y_m = H_m  \!\cdot \!m
\end{equation}
where $H_m=\overline{\a}_i+\bm{\alpha}_i\times\p_{c}^i+\bm{\omega}_i\times(\bm{\omega}_i\times \p_c^i+\v_i)$ and
$\y_m=\bm{F}_i.$
%Another observation of $\p_c^i$ can be made from \cref{eq::dyn_ob2} which is equivalent to 
%\begin{equation}\label{eq::ob_m}
%(\bm{\alpha}_i^\times+\bm{\omega}_i^2)\cdot\p_c^i+\bm{\omega}_i\times\v_i+\overline{\a}_i = \frac{\bm{F}_i}{m}
%\end{equation}
\subsection{Observation Model for $\p_c^i$, $\mathcal{I}_i$ and $m$}
We have developed individual observation models for mass center $\p_c^i$, mass $m$ and inertia tensor $\mathcal{I}_i$ among which the estimations of $m$ and $\mathcal{I}_i$ depend on that of $\p_c^i$. In general, we may prefer to estimate $\p_c^i$, $m$ and $\mathcal{I}_i$ at the same time rather than separately since the former should be more robust and more accurate. Note that $\p_c^i$, $m$ and $\mathcal{I}_i$ are all constants, we may observe all of them just in one model.\par
For $\p_c^i$ and $\mathcal{I}_i$, by \cref{eq::dyn_ob1,eq::ob_mi1} we have 
\begin{equation}\label{eq::oba1}
\begin{bmatrix}
-\bm{F}_i^\times & H_{\mathcal{I}}
\end{bmatrix}
\begin{bmatrix}
\p_c^i\\
\mathcal{I}^S
\end{bmatrix}=\bm{T}_i.
\end{equation}
Besides if $m$ is replaced by $\frac{1}{m}$ as the estimated unknown, an observation model for $\p_c^i$ and $\frac{1}{m}$ from \cref{eq::dyn_ob2} is
\begin{equation}\label{eq::oba2}
\begin{bmatrix}
-\bm{\alpha}_i^\times-{\bm{\omega}_i^\times}^2 &  \bm{F}_i
\end{bmatrix}
\begin{bmatrix}
\p_{c}^i\\
\frac{1}{m}
\end{bmatrix}
= \bm{\omega}_i\times\v_i+\overline{\a}_i.
\end{equation}
According to \cref{eq::oba1,eq::oba2,eq::ob_mi2}, we may derive a model to simultaneously observe $\p_c^i$, $m$ and $\mathcal{I}$ as 
\begin{equation}\label{eq::ob_all}
\y_D = H_D \begin{bmatrix}
\p_c^i\\
\mathcal{I}^S\\
\frac{1}{m}
\end{bmatrix}
\end{equation}
where
\begin{equation}
\nonumber
H_D = \begin{bmatrix}
\bm{F}_i^\perp(\bm{\alpha}_i^\times+{\bm{\omega}_i^\times}^2) & \bm{O} &  \0\\
-\bm{F}_i^\times & H_{\mathcal{I}} &  \0\\
-\bm{\alpha}_i^\times-{\bm{\omega}_i^\times}^2 &\bm{O} &  \bm{F}_i
\end{bmatrix}
\end{equation}
and 
\begin{equation}
\nonumber
\y_D=\begin{bmatrix}
-\bm{F}_i^\perp\cdot(\bm{\omega}_i\times\v_i+\overline{\a}_i)\\
\bm{T}_i\\
\bm{\omega}_i\times\v_i+\overline{\a}_i
\end{bmatrix}.
\end{equation}
In this paper, \cref{eq::ob_all} is used in \cref{section::num} for numerical simulation. Observation models of \cref{eq::ob_pci,eq::ob_mm,eq::ob_mi1} can be used if some of $\p_c^i$, $m$ and $\mathcal{I}_i$ are assumed to be known.\par

The consensus over each inertia tensor $\mathcal{I}_i$ and mass center $\p_c^i$ estimated in $\mathcal{S}_i$ can be made as
\begin{equation}\label{eq::con_I}
\mathcal{I}_j=R_{ji}^T\cdot\mathcal{I}_i \cdot R_{ji}
\end{equation}
and
\begin{equation}\label{eq::con_pc}
\begin{bmatrix}
\p_c^j\\
1
\end{bmatrix}
 = 
\begin{bmatrix}
 R_{ji} & \bm{t}_{ji}\\
 \0 & 1
\end{bmatrix} 
\begin{bmatrix}
\p_c^i\\
1
\end{bmatrix}
\end{equation}
by dynamic consensus in different coordinates to increase identification belief.
\section{Filtering}\label{section::filter}
In \cref{section::measure}, we construct truly linear models in forms of $\y=H\x$ for each sub-problem and all unknowns to be estimated are time-invariant constants.\par 
A general recursive-least-square-like filter may be formulated as \cref{eq::filter} for estimation with linear measurements
\begin{subequations}\label{eq::filter}
\vspace{-1em}
\begin{multline}\label{eq::filter_1}
\x_{k+1} = \arg \min_{\x}\Big\{ \dfrac{1}{2}(\x-\x_k)^T P_k^{-1} (\x-\x_k)+\\ \dfrac{1}{2}(\y_k-H_k\x_k)^TR_k^{-1}(\y_k-H_k\x_k)\Big\}
\end{multline}
and
\begin{equation}
P_{k+1} = \lambda\cdot(P_k^{-1} +H_k^T R_{k}^{-1} H_k)^{-1}
\end{equation}
\end{subequations}
where $P_k$ and $R_k$ are the covariance matrices of $\x_k$ and $H_k\x_k-\y_k$ while $\lambda\geq 1$ is the forgetting factor. Besides if $\lambda=1$ and $\x_k$ is not constrained, then \cref{eq::filter} is just the correction step in Kalman filtering\cite{sorenson1970least}.\par
In this section, we will discuss how to exactly solve the formulated identification problem for cooperative manipulation with appropriate distributed filtering techniques.
\subsection{State-dependent Uncertainties Evaluation}
All the observation models constructed in \cref{section::measure} are pseudo whose either observation matrix $H_k$ or observation $y_k$ or both depend on the unknown $\x_k$ and the noisy measurements. Though it remains numerically feasible by just assuming the covariance matrix $R_k$ is constant which is simplified to a basic least square estimation, a explicit evaluation of the covariance $R_k$ for $H\x_k-\y_k$ is still preferable. This is possible with suitable independence assumptions as the following proposition indicates \cite{choukroun2006novel,srivatsan2016estimating}.
\begin{prop}\label{prop::uncertain}
Let us consider $\bm{b}\in\R^m$ and $\bm{c}\in \R^m$ which are sequences with zero mean. Let $\bm{h}\in\R^n$, $\x\in\R^n$ and a linear
matrix function $G:\R^l\longrightarrow \R^{n\times m}$, such that $\bm{y} = G(\x)\bm{b}+\bm{c}$. Assume that $\x$, $\bm{b}$ and $\bm{c}$ are independent. Then $\Sigma^{\bm{y}}$
$$\Sigma^{\bm{y}} = G(\x)\Sigma^{\b}G^T(\x)+\bm{N}(\Sigma^{\b}\circledast \Sigma^{\x})\bm{N}^T+\Sigma^{\bm{c}}$$
where $\circledast$ is the Kronecker product, $\Sigma^{(\cdot)}$ is the uncertainty associated with $\{\cdot\}$ and $\bm{N}\in \R^{n\times lm}$ is defined as follows
$$\bm{N} \triangleq \begin{bmatrix}
\bm{G}_1 & \bm{G}_2 &\cdots& \bm{G}_m,
\end{bmatrix} $$
$\bm{G}\in\R^{n\times l}$ is obtained from the following identity
$$ \bm{G}_i\x=\bm{G}(\x)\bm{e}_i$$
where $\bm{e}_i$ is column $i$ of the identity matrix of $\R^{m\times m}$.
\end{prop}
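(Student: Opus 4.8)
The plan is to compute the covariance $\Sigma^{\bm{y}} = E\big[(\bm{y}-E[\bm{y}])(\bm{y}-E[\bm{y}])^T\big]$ directly from $\bm{y} = G(\x)\bm{b}+\bm{c}$ and then reorganize the piece that is quadratic in $\x$ into the claimed Kronecker form. First I would record the elementary consequences of the independence and zero-mean hypotheses. Since $\bm{b}$ and $\bm{c}$ are zero mean and independent of each other and of $\x$, we obtain $E[\bm{y}] = E[G(\x)]E[\bm{b}] + E[\bm{c}] = \0$; expanding $\bm{y}\bm{y}^T$, the two cross terms $E[G(\x)\bm{b}\bm{c}^T]$ and its transpose vanish because $\bm{c}$ is zero mean and independent. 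This leaves $\Sigma^{\bm{y}} = E[G(\x)\bm{b}\bm{b}^T G(\x)^T] + \Sigma^{\bm{c}}$, so the additive term $\Sigma^{\bm{c}}$ is immediate and the whole problem reduces to the first expectation.

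Next I would isolate the dependence on $\x$. Conditioning on $\x$ and using independence of $\bm{b}$ from $\x$ gives $E[G(\x)\bm{b}\bm{b}^T G(\x)^T] = E_{\x}\big[G(\x)\Sigma^{\bm{b}} G(\x)^T\big]$, since $E[\bm{b}\bm{b}^T\mid\x] = \Sigma^{\bm{b}}$. Writing $\x = \bar{\x} + \delta\x$ with $\bar{\x} = E[\x]$ and exploiting linearity of $G$ (so that $G(\x) = G(\bar{\x}) + G(\delta\x)$ and $E[G(\delta\x)] = G(\0) = \0$), the two cross terms again drop out under the expectation, yielding $E_{\x}[G(\x)\Sigma^{\bm{b}} G(\x)^T] = G(\bar{\x})\Sigma^{\bm{b}} G(\bar{\x})^T + E\big[G(\delta\x)\Sigma^{\bm{b}} G(\delta\x)^T\big]$. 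The first summand is exactly the term written $G(\x)\Sigma^{\bm{b}} G^T(\x)$ in the statement, with $G$ understood to be evaluated at the mean/estimate $\bar{\x}$, so only the residual second-order term remains to be matched.

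The crux --- and the step I expect to be the main obstacle --- is to identify $E[G(\delta\x)\Sigma^{\bm{b}} G(\delta\x)^T]$ with $\bm{N}(\Sigma^{\bm{b}}\circledast\Sigma^{\x})\bm{N}^T$. Here I would use the column representation $G(\delta\x)\bm{e}_i = \bm{G}_i\,\delta\x$. Writing $\Sigma^{\bm{b}} = (\sigma_{ij})$ entrywise and expanding the product column-wise gives
\[
G(\delta\x)\Sigma^{\bm{b}} G(\delta\x)^T = \sum_{i,j}\sigma_{ij}\,(\bm{G}_i\delta\x)(\bm{G}_j\delta\x)^T = \sum_{i,j}\sigma_{ij}\,\bm{G}_i\,\delta\x\,\delta\x^T\,\bm{G}_j^T,
\]
so taking expectations and using $E[\delta\x\,\delta\x^T] = \Sigma^{\x}$ produces $\sum_{i,j}\sigma_{ij}\,\bm{G}_i\Sigma^{\x}\bm{G}_j^T$. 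It then remains to recognize this double sum as a single block-matrix triple product: the matrix $\Sigma^{\bm{b}}\circledast\Sigma^{\x}$ has $(i,j)$ block equal to $\sigma_{ij}\Sigma^{\x}$, and with the block partition $\bm{N} = [\bm{G}_1\ \cdots\ \bm{G}_m]$ the product $\bm{N}(\Sigma^{\bm{b}}\circledast\Sigma^{\x})\bm{N}^T$ expands to precisely $\sum_{i,j}\bm{G}_i(\sigma_{ij}\Sigma^{\x})\bm{G}_j^T$. Matching the two completes the proof. The delicate bookkeeping lives entirely in this last identification --- correctly aligning the Kronecker block indices against the column index $i$ of $G$ --- while everything preceding it is routine use of independence, zero mean, and linearity of $G$.
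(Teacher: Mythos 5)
Your proof is correct, but there is nothing in the paper to compare it against: the paper states this proposition \emph{without} proof, explicitly deferring to its cited references (Choukroun et al.; Srivatsan et al.) ``due to space limitation.'' Your argument---reducing $\Sigma^{\bm{y}}$ to $E\bigl[G(\x)\Sigma^{\b}G(\x)^T\bigr]+\Sigma^{\bm{c}}$ via zero means and mutual independence, splitting $\x=\bar\x+\delta\x$ and using linearity of $G$ to kill the cross terms, and then identifying $E\bigl[G(\delta\x)\Sigma^{\b}G(\delta\x)^T\bigr]=\sum_{i,j}\sigma_{ij}\,\bm{G}_i\Sigma^{\x}\bm{G}_j^T$ with the block triple product $\bm{N}(\Sigma^{\b}\circledast\Sigma^{\x})\bm{N}^T$---is exactly the standard derivation underlying those references, and every step checks out, including the column-wise expansion $ASA^T=\sum_{i,j}S_{ij}\,(A\bm{e}_i)(A\bm{e}_j)^T$ and the block bookkeeping of the Kronecker product. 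Two points you handled well deserve explicit mention, since they are glossed over in the statement itself: first, you correctly read $G(\x)$ in the leading term as $G(\bar\x)$, i.e.\ $G$ evaluated at the mean (otherwise the right-hand side would be a random quantity while $\Sigma^{\bm{y}}$ is not); second, the statement's dimensions are internally inconsistent ($\x\in\R^n$ yet $G:\R^l\to\R^{n\times m}$ with $\bm{G}_i\in\R^{n\times l}$, and the vector $\bm{h}\in\R^n$ is never used), and your computation implicitly adopts the only consistent reading, $\x\in\R^l$ and $\Sigma^{\x}\in\R^{l\times l}$, which is what makes $\bm{N}\in\R^{n\times lm}$ and the product $\bm{N}(\Sigma^{\b}\circledast\Sigma^{\x})\bm{N}^T$ well defined. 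So your write-up not only fills the gap the paper leaves open but also silently repairs its typos.
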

\cref{prop::uncertain} enables us to evaluate the covariance $R_k$ for $H\x_k-y_k$ only with some independence assumptions of the unknowns and measurements. Due to space limitation, we may not demonstrate this in detail. Readers may refer to \cite{choukroun2006novel,srivatsan2016estimating} for some examples to implement this proposition.
\subsection{Dynamic Consensus in Different Coordinates}\label{subsection::consensus}
For this identification problem, dynamic consensus is needed to compute total wrench and improve the belief over different estimations of unknowns that are essentially the same. Even though each individual robot makes estimations and apply forces and torques in its local reference frame, it is still likely to make consensus in different coordinates by communicating with its neighbours as long as the network is connected.
\begin{prop} \label{prop_3}
Suppose $G=(V,\,E)$ is an undirected $n$-node graph and each node $i$ has initial value $\x_i(0)$. Let $\{A_{ji}|i,\,j\in V\}$ be a set of time-invariant linear transformations such that $A_{ii}=\I$ and $A_{ij}=A_{ik}\cdot A_{kj}$ for any $i,\,j,\,k\in V$, then for the following dynamical system
\begin{equation}\label{eq::con_dyn_11}
\dot{\x}_i(t)=  \sum\limits_{j\in N_i}\Big[ A_{ij}\x_j(t)-\x_i(t)\Big]
\end{equation}
we have
\begin{equation}\label{eq::result_11}
\x_i(t)\longrightarrow \dfrac{1}{n}\sum\limits_{j\in V}A_{ij}\x_j(0)
\end{equation}
and
$$A_{ji}\x_i(t)\longrightarrow \x_{j}(t) $$
as $t\rightarrow \infty$ if $G$ is connected. 
\end{prop}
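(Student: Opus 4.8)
The plan is to reduce the coordinate-varying protocol \cref{eq::con_dyn_11} to the ordinary averaging (Laplacian) protocol by transporting every state into a single common reference frame, and then to invoke the standard convergence theory for the graph Laplacian on a connected graph.

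First I would record the algebraic structure that the hypotheses impose on the family $\{A_{ij}\}$. Setting $i=j$ in $A_{ij}=A_{ik}A_{kj}$ and using $A_{ii}=\I$ gives $\I=A_{ik}A_{ki}$, so every $A_{ik}$ is invertible with $A_{ik}^{-1}=A_{ki}$; combined with the composition law this makes $\{A_{ij}\}$ a cocycle, and in particular $A_{1i}A_{ij}=A_{1j}$ and $A_{ji}A_{i1}=A_{j1}$ for all $i,j\in V$. This is the only place the two structural hypotheses are used, and it is precisely what makes transporting all states into one frame consistent.

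Next I would change variables by expressing each state in node $1$'s frame, $\y_i(t):=A_{1i}\,\x_i(t)$. Differentiating and substituting \cref{eq::con_dyn_11}, then using $A_{1i}A_{ij}=A_{1j}$, the transformation matrices cancel and the dynamics collapse to the classical consensus protocol
\begin{equation}
\nonumber
\dot{\y}_i(t) = \sum_{j\in N_i}\big[\y_j(t)-\y_i(t)\big],
\end{equation}
which is the negative graph-Laplacian flow acting coordinatewise on the stacked state. Since $G$ is connected, its Laplacian $L$ is symmetric positive semidefinite with a simple zero eigenvalue whose eigenspace is spanned by the all-ones vector; hence each $\y_i(t)$ converges to the common limit $\frac{1}{n}\sum_{j\in V}\y_j(0)=\frac{1}{n}\sum_{j\in V}A_{1j}\x_j(0)$, and all the $\y_i$ share this same limit.

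Finally I would transform back. Multiplying the limit by $A_{i1}=A_{1i}^{-1}$ and using $A_{i1}A_{1j}=A_{ij}$ yields $\x_i(t)=A_{i1}\y_i(t)\longrightarrow\frac{1}{n}\sum_{j\in V}A_{ij}\x_j(0)$, which is exactly \cref{eq::result_11}. For the second claim, the common limit of the $\y_i$ gives $\y_i(t)-\y_j(t)\rightarrow\0$, and since $A_{ji}\x_i(t)-\x_j(t)=A_{j1}\big(\y_i(t)-\y_j(t)\big)$ (using $A_{ji}A_{i1}=A_{j1}$ and $\x_j=A_{j1}\y_j$), we obtain $A_{ji}\x_i(t)\rightarrow\x_j(t)$. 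The only genuine content is the cocycle factorization that decouples the frame transformations from the averaging; once that reduction is established the convergence is the textbook Laplacian consensus result, so the main obstacle is merely verifying that the substitution $\y_i=A_{1i}\x_i$ exactly reproduces the vanilla protocol, which hinges entirely on the composition hypothesis $A_{ij}=A_{ik}A_{kj}$.
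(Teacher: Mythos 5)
Your proof is correct and takes essentially the same approach as the paper: the paper likewise defines $\y_i(t)=A_{ki}\x_i(t)$ for a fixed reference node $k$, observes that these variables obey the standard Laplacian consensus dynamics on the connected graph, and multiplies back by $A_{ik}$ to recover \cref{eq::con_dyn_11,eq::result_11}. Your write-up is somewhat more complete than the paper's (you explicitly verify invertibility $A_{ki}=A_{ik}^{-1}$ and spell out the second claim $A_{ji}\x_i(t)\rightarrow\x_j(t)$), but the underlying idea is identical.
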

\begin{proof}
Let $\y_i(t) = A_{ki}\x_i(t)$. As $t\rightarrow \infty$, we have 
\begin{equation}\label{eq::result_22}
\y_i(t) \rightarrow \dfrac{1}{n}\sum\limits_{j\in V} \y_j(0)
\end{equation}
with the following dynamics
\begin{equation}\label{eq::con_dyn2}
\dot{\y}_i(t) =\sum\limits_{j\in N_i}\Big[\y_{j}(t)-\y_i(t)\Big]. 
\end{equation}
Note $A_{ij}=A_{ik}A_{kj}$ and then multiply $A_{ik}$ on both sides of \cref{eq::result_22,eq::con_dyn2} for each $i$, the resulting equations are \cref{eq::result_11,eq::con_dyn_11},
which completes the proof.
\end{proof}
\cref{prop_3} can be further generalized to other cases as these in \cite{spanos2005dynamic} so that distributed filtering techniques may be used \cite{olfati2007distributed}. As for consensus in our problem, suitable linear transformations $\{A_{ij}\}$ can be $g_{ji}$, $R_{ji}$, $\Ad_{g_{ji}}$ etc. as those shown in \cref{eq::con_alpha,eq::wrench,eq::con_I,eq::con_pc}. \par
\subsection{Filtering on Dual Quaternions}
Filtering on quaternions and dual quaternions have been studied in \cite{kraft2003quaternion,choukroun2006novel,srivatsan2016estimating,shuster1993quaternion}. One of critical problems for quaternion and dual quaternion filtering is how to fulfil the unit requirements. Popular methods include regarding the constraint as an extra pseudo-observation \cite{deutschmann1992quaternion}, substituting the constraint to observation \cite{shuster1993quaternion}, or normalizing the filtering result at the end of each step \cite{srivatsan2016estimating}, which often either result in nonlinear observation models or converge to a local minima. In our specific problem, however, it can be shown that we may update the dual quaternion estimation $\widetilde{\q}_{ji}^r$ and $\widetilde{\q}_{ji}^d$ while simultaneously preserving the linearity of observation and satisfying the unit requirement without normalization. This method used for dual quaternion estimation is based on the fact that the relative pose $g_{ji}$ to be estimated is a constant. For brevity we have suppressed subscript ${ji}$ in Eqs (\ref{eq::quat_rot_filter})-(\ref{eq::quat_P_filter}).\par
For the rotational part $\widetilde{\q}^r_k$, since the pseudo-observation is $H_k\widetilde{\q}_k^r=\0$, \cref{eq::filter} may be reformulated as
\begin{subequations}\label{eq::quat_rot_filter}
\begin{equation}
P_k^{-1}=P_{k-1}^{-1}+H_k^T R_k^{-1}H_k,
\end{equation}
\begin{equation}\label{eq::quat_filter_1}
\widetilde{\q}^r_k = \arg\min_{\substack{\|\widetilde{\q}\|=1\\ q_0\geq 0}}\frac{1}{2}\widetilde{\q}^TP_k^{-1}\widetilde{\q}.
\end{equation}
\end{subequations}
in which \cref{eq::quat_filter_1} is equivalent to determining the minimal eigenvalue $\lambda_{\min}$ of $P_k^{-1}$ and can be exactly solved through eigenvalue decomposition. In cases that the multiplicity of $\lambda_{\min}$ is greater than 1, which may sometimes happen if there are not enough observations, we may determine the estimation by minimizing $\|\widetilde{\q}_k^r-\widetilde{\q}_{k-1}^r\|_{P^{-1}_{k-1}}$ among possible choices of $\widetilde{\q}_k^r$.\par
If $\widetilde{\q}^r_k$ is given, the estimation of $\widetilde{\q}_k^d$ is determined by 
\begin{equation}\label{eq::quat_lin_filter}
\widetilde{\q}_k^d = \arg\min_{\widetilde{\q}^T\cdot\widetilde{\q}_k^r=0}\frac{1}{2}\sum\limits_{l=1}^k \left\|H_l^{\bm{\omega}}\widetilde{\q}+H_l^{\v}\widetilde{\q}_k^r\right\|_{R_k^{-1}}^2
\end{equation}
where $H_l^{\bm{\omega}}$ and $H_l^{\v}$ are defined by \cref{eq::qr_ob} and the solution to which is 
\begin{equation}\label{eq::quat_P_filter}
\begin{bmatrix}
\widetilde{\q}_k^d\\
\mu
\end{bmatrix}=\begin{bmatrix}
P_k^{-1} & \widetilde{\q}_k^r\\
{{\widetilde{\q}_k^r}}{}^T & 0
\end{bmatrix}^{-1}
\begin{bmatrix}
S_k\cdot \widetilde{\q}_k^r\\
0
\end{bmatrix}
\end{equation}
in which $\mu$ is the Lagrangian multiplier and $P_k^{-1}$ and $S_k$ are recursively updated by
$$P_k^{-1}=P_{k-1}^{-1}+{H_k^{\bm{\omega}}}^T R_k^{-1}{H_k^{\bm{\omega}}}=\sum\limits_{l=1}^k{H_l^{\bm{\omega}}}^T R_l^{-1}{H_l^{\bm{\omega}}},$$
$$S_k = S_{k-1}-{H_l^{\bm{\omega}}}^T R_l^{-1}H_k^{\v}=-\sum\limits_{l=1}^k{H_l^{\bm{\omega}}}^T R_l^{-1}H_k^{\v}.$$
In this way, a recursive filter to estimate the dual quaternion $\widehat{\x}_{ji}=(\widetilde{\q}_{ji}^r,\,\widetilde{\q}_{ji}^d)$ is developed for estimating relative $g_{ji}\in SE(3)$. Note that with dual quaternion and eigenvalue decomposition, the resulting filter is linear and gives exact optimal solution to the generally nonlinear and nonconvex pose estimation problem on $SE(3)$.
\begin{figure}[t]
\centering
\begin{tabular}{cc}
  \vspace{-0.5em}
  \subfloat[][]{\includegraphics[trim =6mm 4mm 12mm 6mm,width=0.35\textwidth]{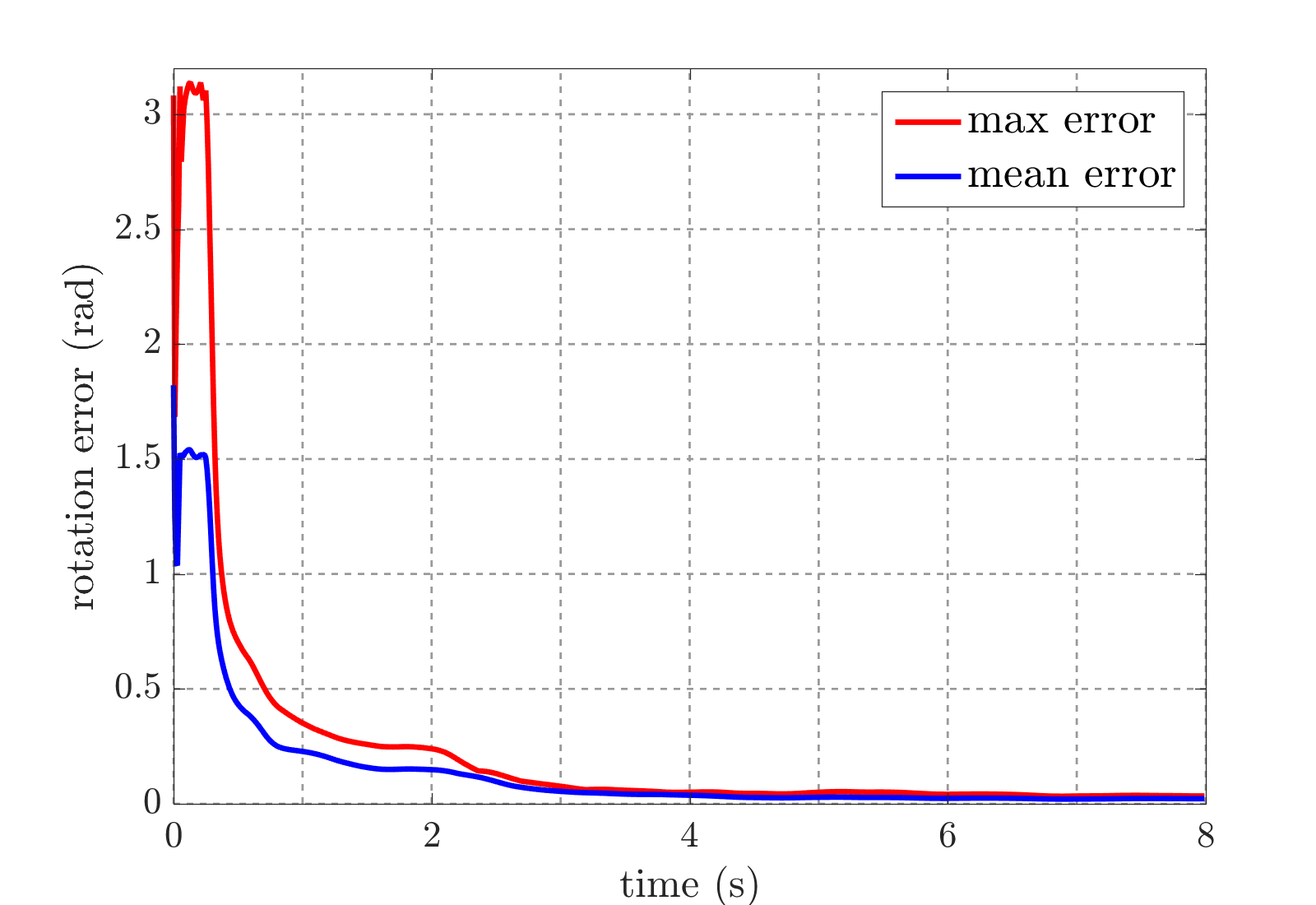}\label{fig::g_R}} \\
%  \vspace{-0.5em}
  \subfloat[][]{\includegraphics[trim =4mm 4mm 8mm 6mm,width=0.35\textwidth]{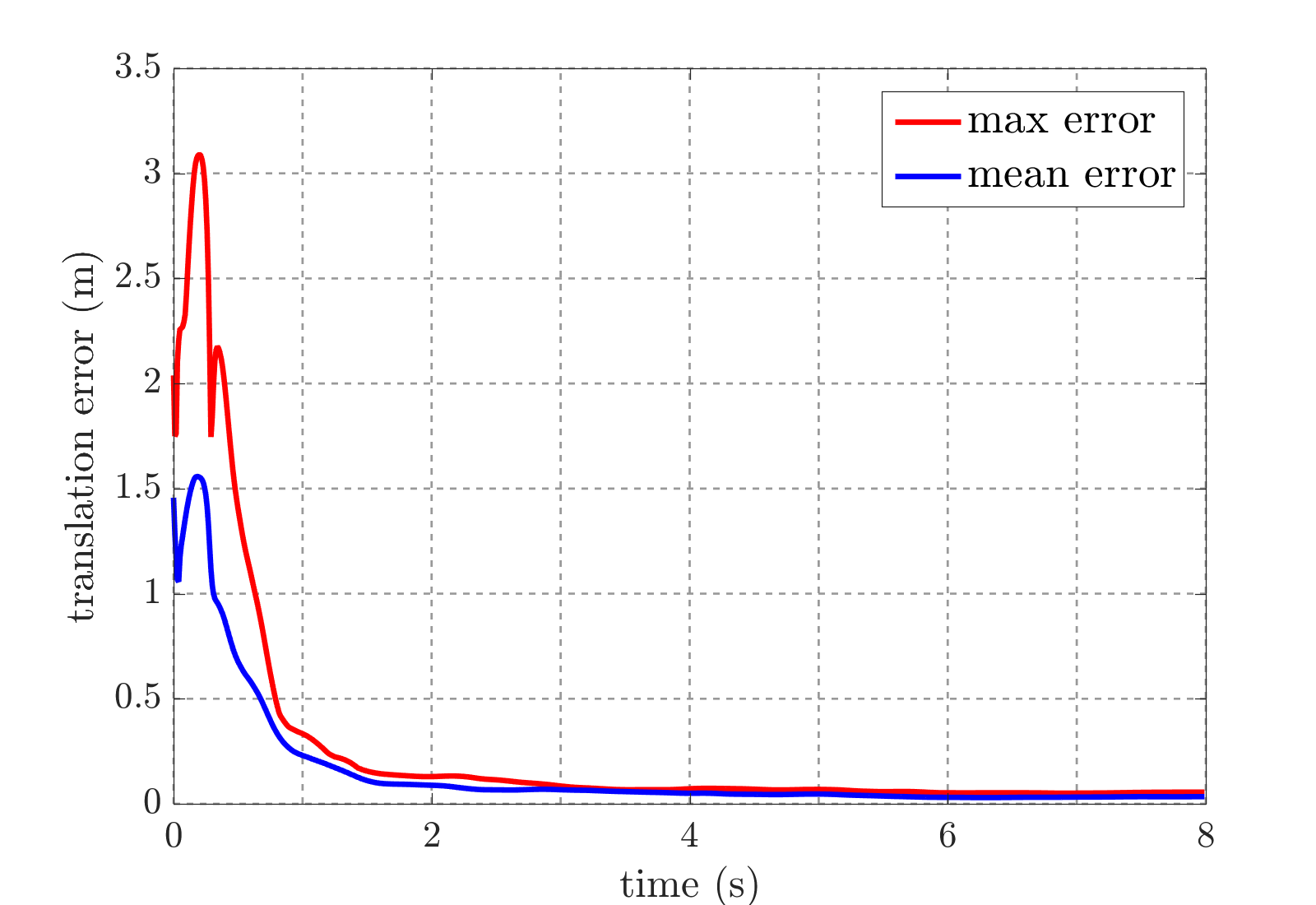}\label{fig::g_t}} 
\end{tabular}
\caption{Identification results for relative pose $g_{ji}$ in which (a) is the estimation error for rotation and (b) for translation. At $t=8$s, the mean error for rotation is $0.036\,\mathrm{rad}$ and mean error for translation is $0.032\;\mathrm{m}$. } 
\label{fig::err_g} 
\end{figure}
\begin{figure*}[!htbp]
\centering
\begin{tabular}{ccc}
  \subfloat[][]{\includegraphics[trim =23mm 2mm 5mm 2mm,width=0.32\textwidth]{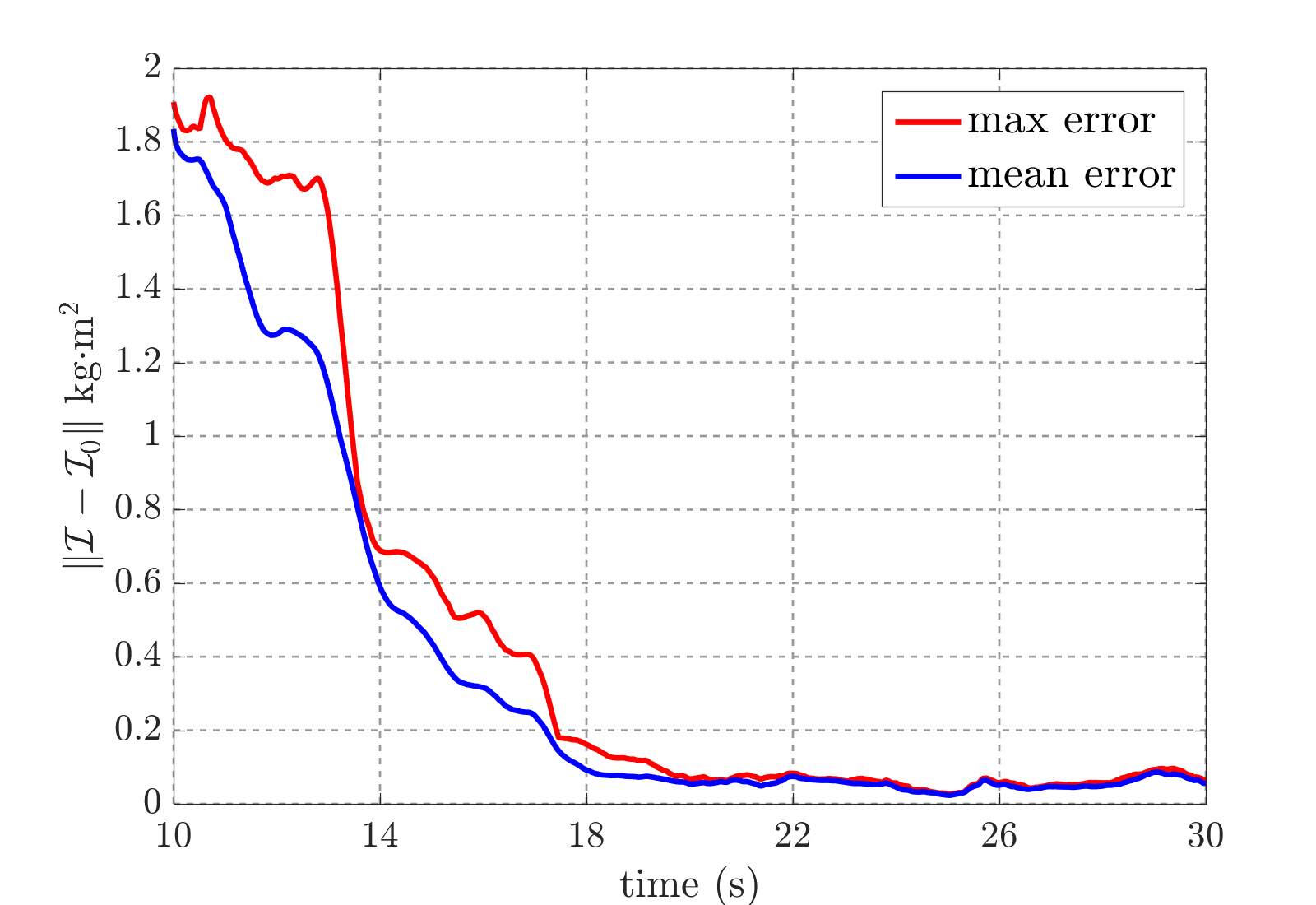}}\label{fig::J} &
  \subfloat[][]{\includegraphics[trim =23mm 2mm 5mm 2mm,width=0.32\textwidth]{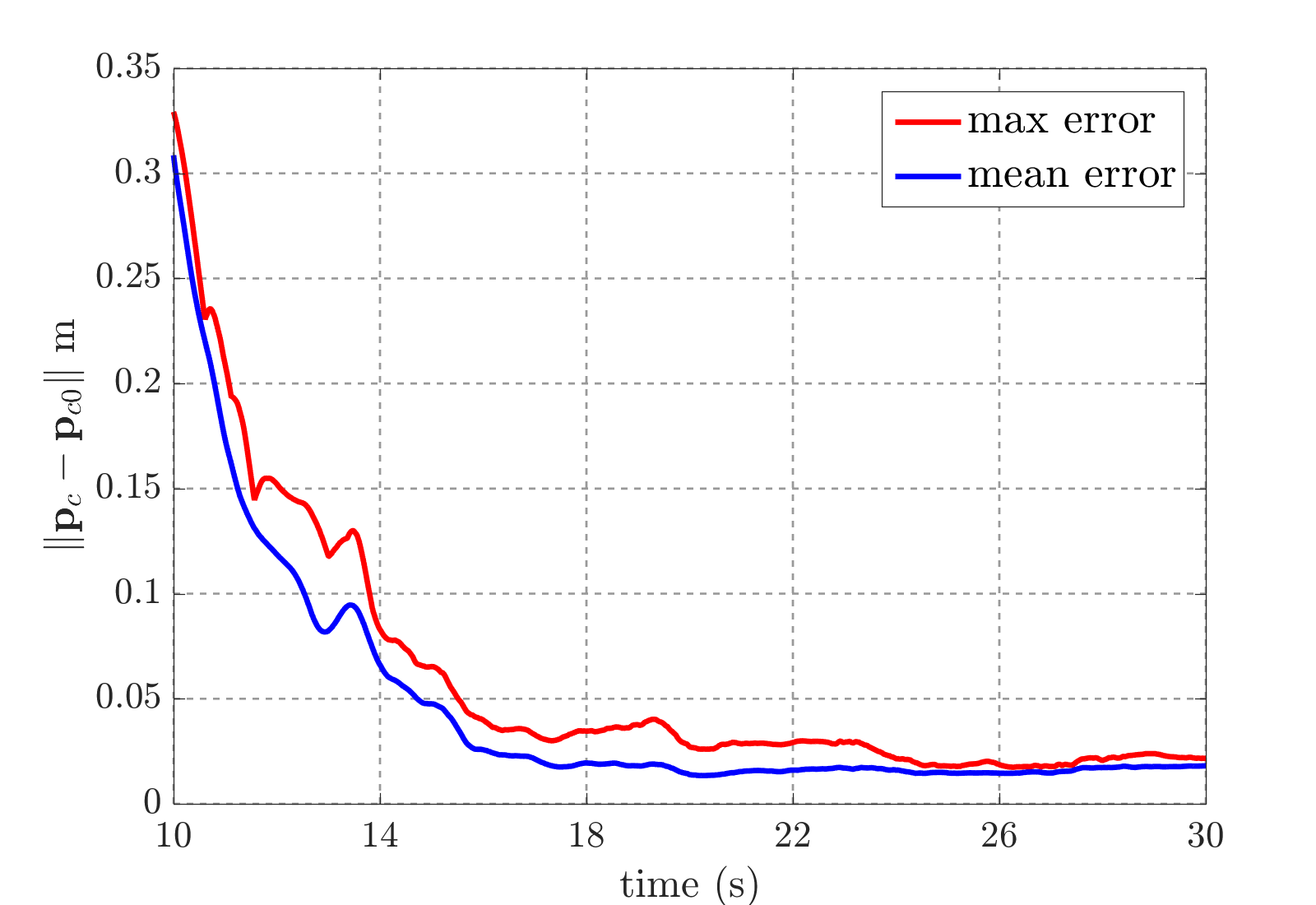}}\label{fig::pc} &
  \subfloat[][]{\includegraphics[trim =23mm 2mm 5mm 2mm,width=0.32\textwidth]{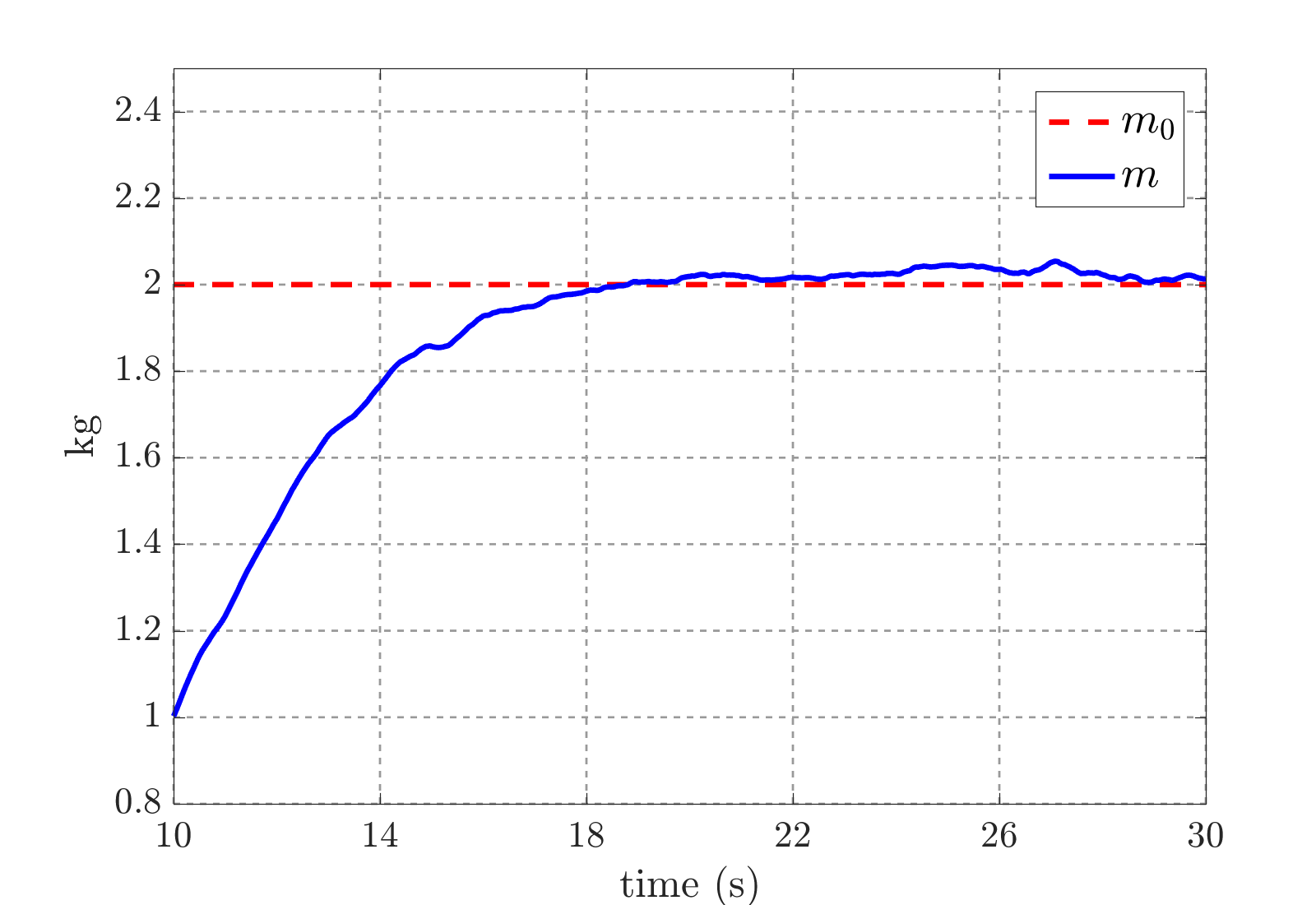}}\label{fig::m}
\end{tabular}
\caption{Estimation inertia tensor $\mathcal{I}$, mass center ${\p_c}$ and mass $m$. (a) is $\mathcal{I}$, (b) is ${\p_c}$ and (c) is $m$. At $t=20$s, the mean estimation error for $\mathcal{I}$ is $0.075\,\mathrm{kg}\cdot\mathrm{m}^2$ and for $\p_c$ is $0.015\,\mathrm{m}$ and the estimated $\hat{m}=2.03\,\mathrm{kg}$ while the true $m_0=2\,\mathrm{kg}$.}
\label{fig::inertia}  
\vspace{-1.5em}
\end{figure*}
\begin{figure*}[!htbp]
\centering
\begin{tabular}{ccc}
  \subfloat[][]{\includegraphics[trim =23mm 2mm 5mm 2mm,width=0.32\textwidth]{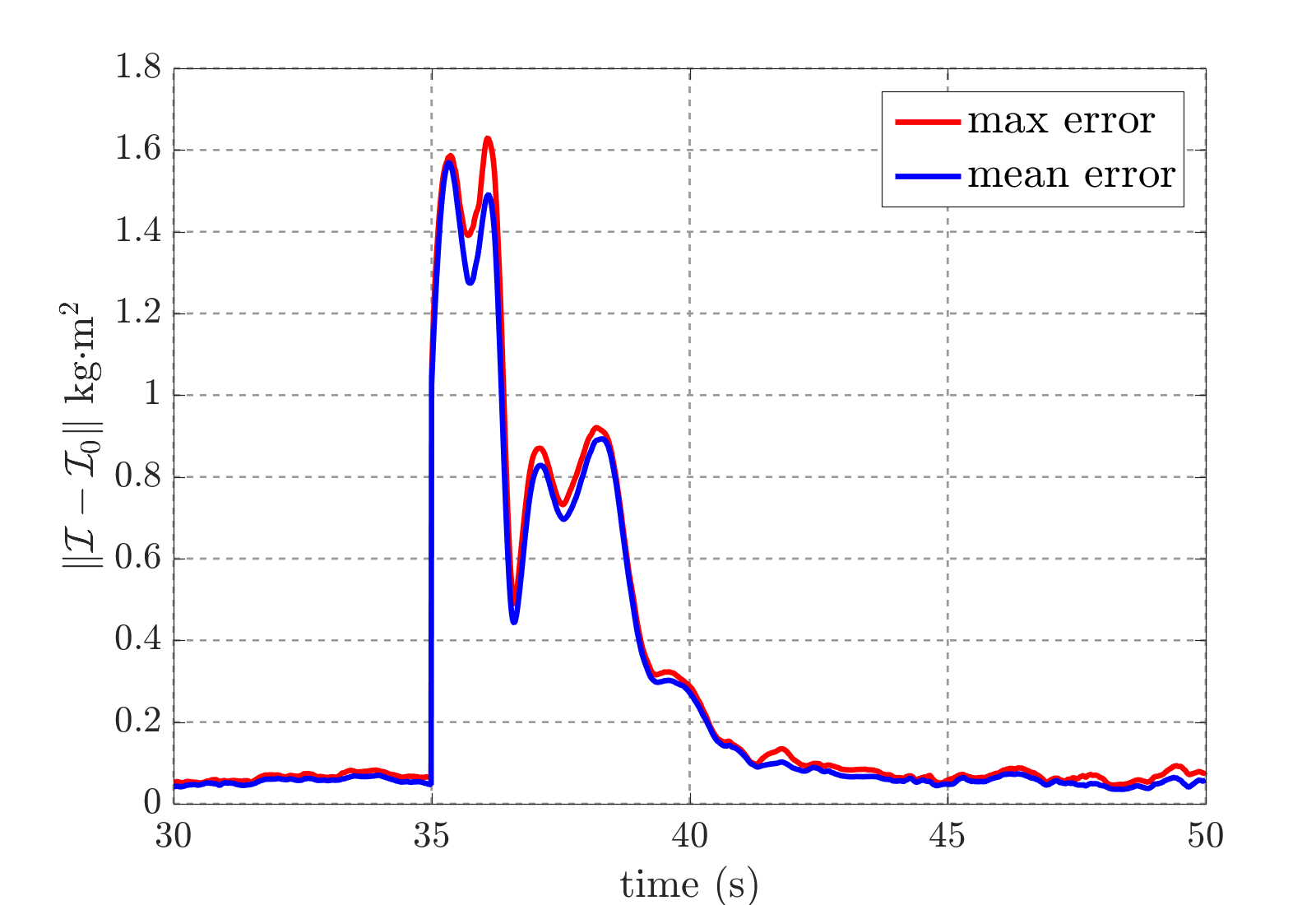}}\label{fig::J_c} &
  \subfloat[][]{\includegraphics[trim =23mm 2mm 5mm 2mm,width=0.32\textwidth]{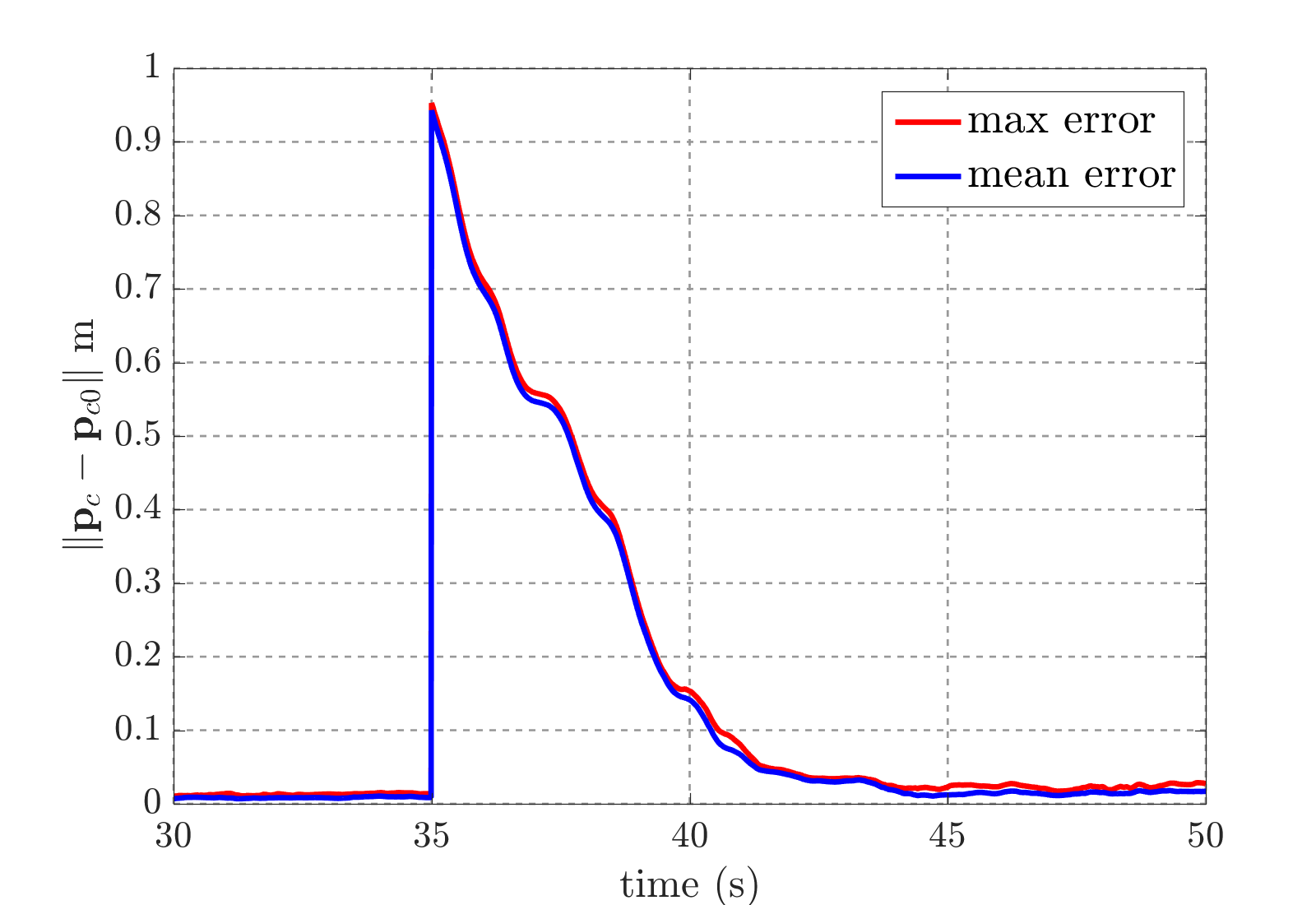}}\label{fig::pc_c} &
  \subfloat[][]{\includegraphics[trim =23mm 2mm 5mm 2mm,width=0.32\textwidth]{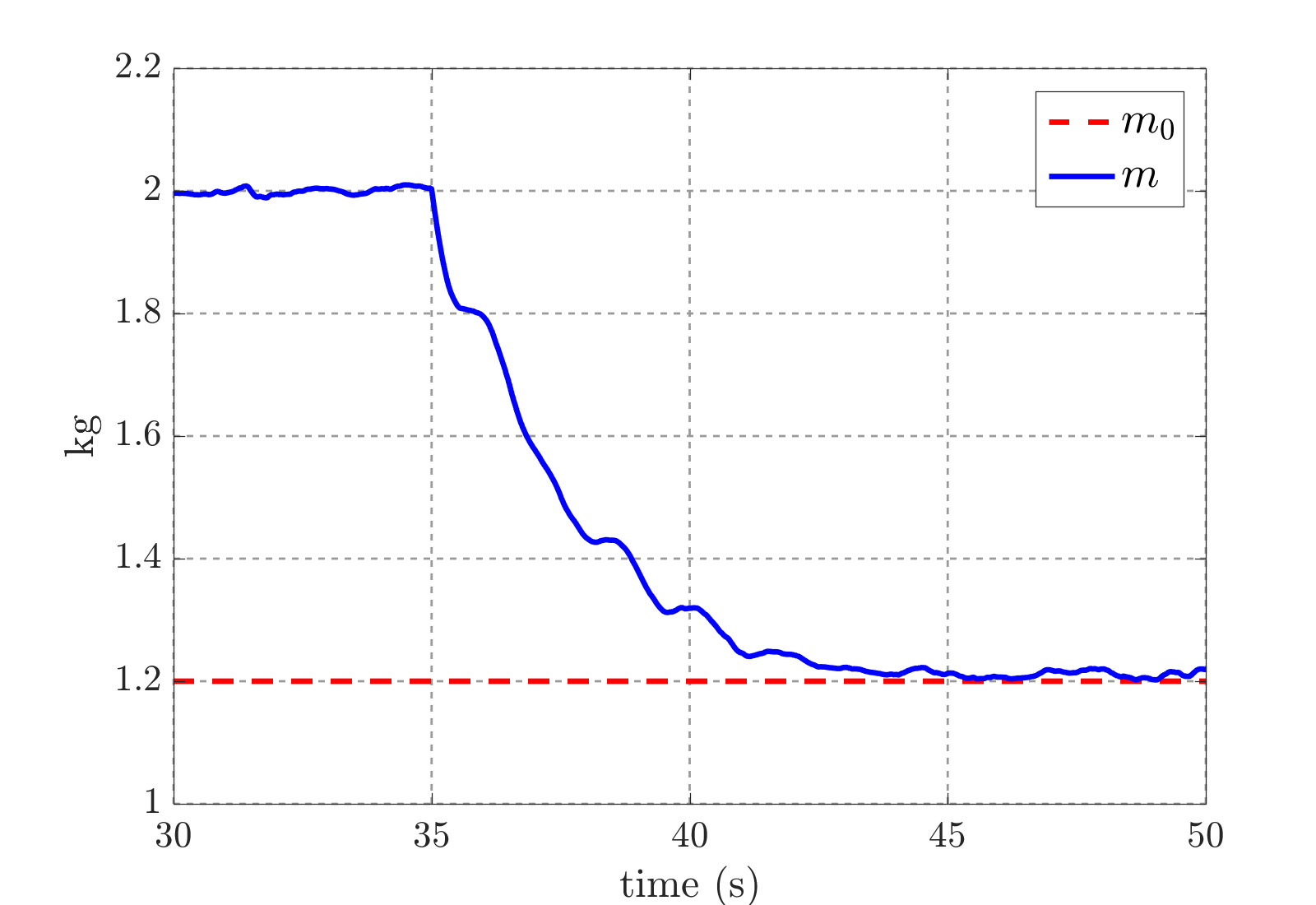}}\label{fig::m_c}
\end{tabular}
\caption{Adaptive estimation of inertia tensor $\mathcal{I}$, mass center ${\p_c}$ and mass $m$ after a sudden change of the load at $t=35$s. (a) is $\mathcal{I}$, (b) is ${\p_c}$ and (c) is $m$. At $t=50$s, the mean estimation error is $0.058\,\mathrm{kg}\cdot\mathrm{m}^2$ for inertia tensor $\mathcal{I}$, and $0.017\,\mathrm{m}$ for $\p_c$, and the estimated mass $\hat{m}=1.196\,\mathrm{kg}$ while the true $m_0=1.2\,\mathrm{kg}$.}
\label{fig::adaptive} 
\vspace{-1.5em} 
\end{figure*}
\section{Numerical Results}\label{section::num}
In numerical simulations, there are two procedures for identification: first, each robot $i$ estimates the relative pose $g_{ji}$ with its neighbours $j\in N_i$; when the estimation of $g_{ji}$ converges, each robot $i$ starts estimating $\mathcal{I}_i$, $\p_c^i$ and $m$ in its local reference frame while communicating with its neighbours to make consensus. We assume that there can be a sudden change of inertia parameters for the load during manipulation. The numerical simulation results indicate that the approach proposed is able to identify kinematic and dynamic unknown parameters with satisfactory efficiency and accuracy. According to our simulation results, the convergence speed mainly depends on measurement noise.\par
In simulation, there are $n=5$ robots manipulating a 3D rigid body and the robots are connected as a ring network. The relative pose $g$ between each robot, inertia tensor $\mathcal{I}$, mass center $\p_c$ and mass $m$ are priorly unknown. The measurement noise for $\bm{\omega}_i$, $\bm{v}_i$, $\overline{\bm{a}}_i$, $\bm{f}_i$ and $\bm{\tau}_i$ are zero-mean Gaussians with covariance $\Sigma=\delta^2 \cdot\I$ where $\delta=0.4$.
\subsection{Estimation of $g_{ji}$}\label{subsection::pose_est}
Each individual robot $i$ estimates the relative pose $g_{ji}$ with its neighbour $j\in N_i$ using linear dual quaternion observation models. In fact, for each pair $(i,\,j)\in E$, only one of $g_{ij}$ or $g_{ji}$ needs to be estimated and the other can be got by $g_{ij}g_{ji}=\I$. All the initial guesses for $g_{ji}$ are zero rotation and zero translation. The identification results are \cref{fig::err_g} and it can be seen that all robots accurately identify the relative pose with neighbours in several seconds.
%\vspace{-1em}
\subsection{Estimation of $\mathcal{I}_i$, $\p_c^i$ and $m$}
The estimated relative pose $g_{ji}$ at $t=8$s are used for inertia parameters identification. Each robot only knows forces and torques applied by itself and can only communicate with its neighbours. The total wrench $\bm{F}_i$ and $\bm{T}_i$ is computed through consensus in different coordinates. We may also make consensus on $\mathcal{I}_i$, $\p_c^i$ and $m$, the resulting estimation errors of which may vary slightly since $g_{ji}$ are not perfectly known. As for the initial guesses $\mathcal{I}_i=\mathrm{diag}\{1,\,1,\,1\}\mathrm{kg}\cdot\mathrm{m}^2$, $\p_c^i$ is the geometric center of contact points and $m=1$kg. The initial covariance is $P_0 = 100\I$ and the forgetting factor is $\lambda=1.005$. The results are shown in \cref{fig::inertia} and it can be seen that it takes less than $15$s for the estimation to converge. \par
We also test the robustness of the approach to the sudden change of inertia, mass center and mass. In simulation, at $t=35$s after the estimation converges, there is a change of the load and the results of re-identification are \cref{fig::adaptive} which indicate that our approach may be used for adaptive cooperative manipulation.
\section{Conclusion}\label{section::conclusion}
In this paper, we present a distributed and recursive approach to online identification for rigid body cooperative manipulation. Linear observation models with local measurements are derived whose uncertainties can be explicitly evaluated under independence assumptions. We also develop dynamic consensus in different coordinates and an appropriate filter for pose estimation with dual quaternions.
\section*{Acknowldgement}
This material is based upon work supported by the National Science Foundation under Grant CNS 1329891. Any opinions, findings and conclusions or recommendations expressed in this material are those of the authors and do not necessarily reflect the views of the National Science Foundation.

%\section{Acknowledgement}
%This work was supported by the National Science Foundation Grant CNS 1329891. Any opinions, findings and conclusions or recommendations expressed in this material are those of the authors and do not necessarily reflect the views of the National Science Foundation.
%it is in general preferable to model the system on $SE(3)$ since the resulting dynamic equations are much more concise and singularity is avoided, which simplifies the overall identification problems with suitable measurements. 
%
% system dynamics generally results in much more complicated equations which often has the problems of singularity. 
%
%However, for rigid body cooperative manipulation, it is generally preferable to describe the system dynamics in local coordinates rather than global since the resulting dynamic equations are concise and there are no problems of singularity. Thus 
 %Some of them rely on the idea of ``caging'' to form an object closure to move the load on a plane \cite{pereira2004decentralized,fink2008multi}. The authors in \cite{chen2015occlusion,wang2016multi} studies how to transport an object without communication between each agent. In \cite{rubenstein2013collective,becker2013massive} a theory of collective transport without communication and prior knowledge of the load is validated by experiments. Adaptive force and velocity cooperative manipulation with uncertain kinematic parameters is discussed in \cite{erhart2013adaptive}. \par
%%%%%%%%%%%%%%%%%%%%%%%%%%%%%%%%%%%%%%%%%%%%%%%%%%%%%%%%%%%%%%%%%%%%%%%%%%%%%%%%
\bibliographystyle{IEEEtran}
\bibliography{mybib}

% Generated by IEEEtran.bst, version: 1.14 (2015/08/26)
\begin{thebibliography}{10}
\providecommand{\url}[1]{#1}
\csname url@samestyle\endcsname
\providecommand{\newblock}{\relax}
\providecommand{\bibinfo}[2]{#2}
\providecommand{\BIBentrySTDinterwordspacing}{\spaceskip=0pt\relax}
\providecommand{\BIBentryALTinterwordstretchfactor}{4}
\providecommand{\BIBentryALTinterwordspacing}{\spaceskip=\fontdimen2\font plus
\BIBentryALTinterwordstretchfactor\fontdimen3\font minus
  \fontdimen4\font\relax}
\providecommand{\BIBforeignlanguage}[2]{{%
\expandafter\ifx\csname l@#1\endcsname\relax
\typeout{** WARNING: IEEEtran.bst: No hyphenation pattern has been}%
\typeout{** loaded for the language `#1'. Using the pattern for}%
\typeout{** the default language instead.}%
\else
\language=\csname l@#1\endcsname
\fi
#2}}
\providecommand{\BIBdecl}{\relax}
\BIBdecl

\bibitem{pereira2004decentralized}
G.~A. Pereira, M.~F. Campos, and V.~Kumar, ``Decentralized algorithms for
  multi-robot manipulation via caging,'' \emph{The International Journal of
  Robotics Research}, 2004.

\bibitem{fink2008multi}
J.~Fink, M.~A. Hsieh, and V.~Kumar, ``Multi-robot manipulation via caging in
  environments with obstacles,'' in \emph{IEEE International Conference on
  Robotics and Automation (ICRA)}, 2008.

\bibitem{murphey2008adaptive}
T.~D. Murphey and M.~Horowitz, ``Adaptive cooperative manipulation with
  intermittent contact,'' in \emph{IEEE International Conference on Robotics
  and Automation (ICRA)}, 2008.

\bibitem{li1989grasping}
Z.~Li, P.~Hsu, and S.~Sastry, ``Grasping and coordinated manipulation by a
  multifingered robot hand,'' \emph{The International Journal of Robotics
  Research}, 1989.

\bibitem{yamashita2000motion}
A.~Yamashita, M.~Fukuchi, J.~Ota, T.~Arai, and H.~Asama, ``Motion planning for
  cooperative transportation of a large object by multiple mobile robots in a
  3d environment,'' in \emph{IEEE International Conference on Robotics and
  Automation (ICRA)}, 2000.

\bibitem{bonitz1996internal}
R.~Bonitz and T.~C. Hsia, ``Internal force-based impedance control for
  cooperating manipulators,'' \emph{IEEE Transactions on Robotics and
  Automation}, 1996.

\bibitem{wu2014geometric}
G.~Wu and K.~Sreenath, ``Geometric control of multiple quadrotors transporting
  a rigid-body load,'' in \emph{IEEE Conference on Decision and Control (CDC)},
  2014.

\bibitem{verginis2016robust}
C.~K. Verginis, M.~Mastellaro, and D.~V. Dimarogonas, ``Robust quaternion-based
  cooperative manipulation without force/torque information⋆,'' \emph{arXiv
  preprint arXiv:1610.01297}, 2016.

\bibitem{franchi2015decentralized}
A.~Franchi, A.~Petitti, and A.~Rizzo, ``Decentralized parameter estimation and
  observation for cooperative mobile manipulation of an unknown load using
  noisy measurements,'' in \emph{IEEE International Conference on Robotics and
  Automation (ICRA)}, 2015.

\bibitem{daniilidis1996dual}
K.~Daniilidis and E.~Bayro-Corrochano, ``The dual quaternion approach to
  hand-eye calibration,'' in \emph{IEEE International Conference on Pattern
  Recognition}, 1996.

\bibitem{srivatsan2016estimating}
R.~A. Srivatsan, G.~T. Rosen, F.~Naina, and H.~Choset, ``Estimating {${SE(3)}$}
  elements using a dual-quaternion based linear {Kalman} filter,'' \emph{the
  proceedings of Robotics Science and Systems}, 2016.

\bibitem{choukroun2006novel}
D.~Choukroun, I.~Y. Bar-Itzhack, and Y.~Oshman, ``Novel quaternion {Kalman}
  filter,'' \emph{IEEE Transactions on Aerospace and Electronic Systems}, 2006.

\bibitem{wang2016kinematic}
Z.~Wang and M.~Schwager, ``Kinematic multi-robot manipulation with no
  communication using force feedback,'' in \emph{IEEE International Conference
  on Robotics and Automation (ICRA)}, 2016.

\bibitem{sorenson1970least}
H.~W. Sorenson, ``Least-squares estimation: from {Gauss} to {Kalman},''
  \emph{IEEE spectrum}, pp. 63--68, 1970.

\bibitem{spanos2005dynamic}
D.~P. Spanos, R.~Olfati-Saber, and R.~M. Murray, ``Dynamic consensus on mobile
  networks,'' in \emph{IFAC World Congress}, 2005.

\bibitem{olfati2007distributed}
R.~Olfati-Saber, ``Distributed {K}alman filtering for sensor networks,'' in
  \emph{IEEE Conference on Decision and Control}, 2007.

\bibitem{kraft2003quaternion}
E.~Kraft, ``A quaternion-based unscented {K}alman filter for orientation
  tracking,'' in \emph{Proceedings of the Sixth International Conference of
  Information Fusion}, 2003.

\bibitem{shuster1993quaternion}
M.~D. Shuster, ``The quaternion in {K}alman filtering,'' \emph{Advances in the
  Astronautical Sciences}, 1993.

\bibitem{deutschmann1992quaternion}
J.~Deutschmann, I.~Bar-Itzhack, and K.~Galal, ``Quaternion normalization in
  spacecraft attitude determination,'' in \emph{Guidance, Navigation and
  Control Conference}, 1992.

\end{thebibliography}

\end{document}